\newtheorem{theorem}{Theorem}[section]
\newtheorem{lemma}[theorem]{Lemma}
\theoremstyle{definition}
\newtheorem{definition}[theorem]{Definition}
\newtheorem{remark}{Remark}
\renewcommand\MR[1]{\relax\ifhmode\unskip\spacefactor3000 \space\fi MR~\MRhref{#1}{#1}}
\numberwithin{equation}{section}
\newcommand{\nR}{\mathbb R}
\newcommand{\nZ}{\mathbb Z}
\newcommand{\nT}{\mathbb T}
\newcommand{\nN}{\mathbb N}
\newcommand{\B}{\mathcal{B}}
\newcommand{\vphi}{\varphi}
\newcommand{\tabM}{\textrm{ \:\:\:\:\: }}
\newcommand{\maps}{\rightarrow}
\newcommand{\sand}{\tabM\text{and}\tabM}
\newcommand{\cnj}[1]{\overline{#1}}
\newcommand{\pd}[2]{\frac{\partial #1}{\partial #2}}
\newcommand{\norm}[1]{\left\Vert#1\right\Vert}
\newcommand{\abs}[1]{\left|#1\right|}
\newcommand{\set}[1]{\left\{#1\right\}}
\newcommand{\ip}[2]{\left<#1,#2\right>}
\newcommand{\pnt}[1]{\left(#1\right)}
\newcommand{\pair}[2]{\left(#1,#2\right)}
\def\arXiv#1{
  {\href{http://arxiv.org/pdf/#1}
   {{\mdseries\ttfamily arXiv:#1}}}}
\title[Inviscid Voigt Hydrodynamic Models]
      {On the Higher-Order Global Regularity of the  Inviscid Voigt-Regularization
of Three-Dimensional Hydrodynamic Models}
\author{Adam Larios}
\address[Adam Larios]{Department of Mathematics\\
                University of California, Irvine\\
		Irvine CA 92697-3875, USA}
\email[Adam Larios]{alarios@math.uci.edu}
\author{Edriss S. Titi}
\address[Edriss S. Titi]{Department of Mathematics, and Department of Mechanical and Aero-space Engineering,\\ University of California, Irvine,\\ Irvine CA 92697-3875, USA.\\  Also The Department of Computer Science and Applied Mathematics,\\ The Weizmann Institute of Science, Rehovot 76100, Israel}
\email[Edriss S. Titi]{etiti@math.uci.edu and edriss.titi@weizmann.ac.il}
\subjclass{Primary: 35Q30, 76A10, 76B03, 76D03, 76F20, 76F55, 76F65, 76W05}
 \keywords{Euler-Voigt, Navier-Stokes-Voigt, Inviscid Regularization, Simplified Bardina Model, Turbulence Models, $\alpha-$Models, MHD Equations, Blow-Up Criterion for Euler.}
\thanks{{\bf To appear in:} Discrete and  Continuous Dynamical Systems.}
\begin{document}

\begin{abstract}
We prove higher-order and a Gevrey class (spatial analytic) regularity of solutions to the Euler-Voigt inviscid $\alpha$-regularization of the three-dimensional Euler equations of ideal incompressible fluids.  Moreover, we establish the convergence of strong solutions of the Euler-Voigt model to the corresponding solution of the three-dimensional Euler equations for inviscid flow on the interval of existence of the latter.  Furthermore, we derive a criterion for finite-time blow-up of the Euler equations based on this inviscid regularization.  The coupling of a magnetic field to the Euler-Voigt model is introduced to form an inviscid regularization of the inviscid irresistive magneto-hydrodynamic (MHD) system.  Global regularity of the regularized MHD system is also established.
\end{abstract}

\maketitle
 \thispagestyle{empty}

\textit{ \begin{center}
This work is dedicated to Professor Peter Kloeden\\on the occasion of his 60th birthday.         \end{center}}

\section{Introduction}\label{sec:Int}

One of the outstanding problems in mathematical physics is to find an accurate, practical description of turbulent flows.  This problem is not only out of reach for current mathematical tools, but direct numerical simulation of detailed turbulent flows has proven to be computationally prohibitive.  This is due to the current inability to resolve the wide range of the underlying spatial and temporal scales, even by using the most powerful state-of-the-art computers.  For the case of an incompressible, viscous, homogeneous fluid in a domain $\Omega\subset\nR^3$, the governing equations are widely thought to be given by the Navier-Stokes equations
\begin{subequations}\label{NSE}
\begin{align}
\label{NSE_mom}
\partial_tv +(v\cdot\nabla)v+\nabla p&= \nu\triangle v + f &\text{ in }\Omega\times[0,T),&
\\\label{NSE_con}
\nabla\cdot v &=  0 &\text{ in }\Omega\times[0,T),&
\\\label{NSE_int}
v(0)&= v^{in} &\text{ in }\Omega, \phantom{\times[0,T),}&
\\\label{NSE_bdy}
\text{periodic, or } v&= 0& \text{ on }\partial\Omega.\phantom{\times[0,T)}&
\end{align}
\end{subequations}
where $v(x,t)=(v_1(x,t),v_2(x,t),v_3(x,t))$ denotes the velocity field of the fluid at the point $x=(x_1,x_2,x_3)\in\Omega$ and the time $t\in[0,T)$, $T>0$; $p(x,t)$ denotes the pressure; $f(x,t)=(f_1(x,t),f_2(x,t),f_3(x,t))$ is the body (external) forcing on the fluid and  $\nu>0$ is the kinematic viscosity.  The Dirichlet boundary conditions $v|_{\partial\Omega}=0$ correspond to the physically relevant no-slip case, while the equations under periodic boundary conditions are mathematically simpler, and preserve many (though not all) of the structures present in the Dirichlet case.  In the inviscid case, which is the focus of this paper, the equations are known as the Euler equations.  These equations are identical to \eqref{NSE}, except that $\nu=0$ and the boundary conditions \eqref{NSE_bdy} either remain periodic, or are replaced by the Neumann no penetration  boundary conditions $v\cdot n=0 $ on $\partial\Omega$, where $n$ is the outward point normal vector $\Omega$.  Despite much work on the three-dimensional Euler equations over the last two and a half centuries, many basic questions remain unanswered.  For recent surveys of the known results about the Euler equations, see, e.g., \cite{Bardos_Titi_2007, Constantin2007, Majda_Bertozzi_2002}.

As a result of the unresolved difficulties mentioned above, researchers have focused on obtaining reliable, computable models for the large-scale behavior, by considering, for example, spatial or temporal averages of the Navier-Stokes equations.  However, because of the nonlinear nature of  \eqref{NSE}, it is not possible to obtain an exact closed analytical system for the large-scale motion.  Thus, one must resort to modeling the large-scale motion of the dynamics of  \eqref{NSE} for turbulent flows without having to compute their full-scale dynamics.  In many systems, this is often achieved by filtering, or taking a localized spatial averaging, of the equations for a quantity $\vphi$ to get equations for some filtered quantity $\cnj{\vphi}$.  However, if we apply this method to the non-linear equations \eqref{NSE} (say, under periodic boundary conditions), we arrive at
\begin{subequations}\label{NSEav}
\begin{align}
\label{NSEav_mom}
\partial_t\cnj{v} +\nabla\cdot(\cnj{v\otimes v})+\nabla \cnj{p}&=\nu\triangle \cnj{v} + \cnj{f},
\\\label{NSEav_inc}
\nabla\cdot \cnj{v} &= 0,
\end{align}
\end{subequations}
where we have used the fact that $(v\cdot\nabla) v=\nabla\cdot(v\otimes v)$, due to the incompressibility.  Thus, since $\cnj{v\otimes v}\neq\cnj{v}\otimes \cnj{v}$ for any useful definition of the filtering $\cnj{v}$, \eqref{NSEav} is a system of equations in both $\cnj{v}$ and $v$ (or, equivalently, for the mean $\cnj{v}$ and the fluctuation $v-\cnj{v}$), rather than just $\cnj{v}$ alone.  The fact that we now have more unknowns than equations is known as the closure problem of turbulence.  By adding and subtracting $\nabla\cdot(\cnj{v}\otimes\cnj{v})$ in \eqref{NSEav_mom}, we realize that we must deal with the Reynolds stress tensor,
\begin{equation*}
 \mathcal{R}(v,v):= \cnj{v\otimes v}-\cnj{v}\otimes \cnj{v}.
\end{equation*}
Much effort by researchers modeling turbulence has gone into finding useful approximations to $\mathcal{R}(v,v)$ in terms of $\cnj{v}$ alone.

The approximation $\mathcal{R}(v,v)\approx \cnj{\cnj{v}\otimes \cnj{v}}-\cnj{\cnj{v}}\otimes \cnj{\cnj{v}}$ was introduced and studied by Bardina in \cite{Bardina1980}.  Later, in \cite{Layton_Lewandowski2006}, a simpler approximation was considered, namely $\mathcal{R}(v,v)\approx \cnj{\cnj{v}\otimes \cnj{v}}-\cnj{v}\otimes \cnj{v}$, where in particular the Helmholtz filtering operator $\cnj{\vphi}:= (I-\alpha^2\triangle)^{-1}\vphi$ was used, where $\alpha>0$ is a length scale that represents the width of the spatial filter. (Note that here, the inverse is taken in the context of mean-zero functions with periodic boundary conditions).  This choice of filtering has proven to be very effective in turbulence modeling (see, e.g., \cite{Cao_Lunasin_Titi2006, FoiasHolmTiti2002, IlyinLunasinTiti2006, AlphaClosure1998, Alpha1999, AlphaChannelsPipes1998, CheskidovHolmOlsonTiti2005}), in particular, it is a key ingredient to successes of the $\alpha$-models of turbulence, which were introduced as a contribution to the efforts of modeling large-scale motion.  These analytical models have shown much promise in recent years, notably in the benchmark cases of turbulent flows in pipes and channels \cite{Cao_Lunasin_Titi2006, FoiasHolmTiti2002, IlyinLunasinTiti2006, AlphaClosure1998, Alpha1999, AlphaChannelsPipes1998, CheskidovHolmOlsonTiti2005}.  Namely, analytical solutions to the filtered steady-state equations of the $\alpha$-models were found to successfully match time-averaged experimental data of turbulent flows for a wide range of large Reynolds numbers (see, e.g., \cite{Cao_Lunasin_Titi2006, AlphaClosure1998, Alpha1999, AlphaChannelsPipes1998, CheskidovHolmOlsonTiti2005}).  Using the Helmholtz filtering and the approximation $\mathcal{R}(v,v)\approx \cnj{\cnj{v}\otimes \cnj{v}}-\cnj{v}\otimes \cnj{v}$ in \eqref{NSEav} results in the following model, known as the simplified Bardina model:
\begin{subequations}\label{simp_Bardina}
   \begin{align}
(I-\alpha^2 \triangle)\partial_t u +(u\cdot\nabla)u+\nabla p&= \nu\triangle (I-\alpha^2 \triangle)u + f,
\\\nabla\cdot u &=  0.
\end{align}
\end{subequations}
where $u:=\cnj{v}=(I-\alpha^2\triangle)^{-1}v$.  Note that in the case where $\alpha=0$, \eqref{simp_Bardina}, as a PDE, coincides formally with equations \eqref{NSE} under periodic boundary conditions, a fact which is true for all other $\alpha$-models (see, e.g., \cite{FoiasHolmTiti2002, IlyinLunasinTiti2006, AlphaClosure1998, Alpha1999, AlphaChannelsPipes1998, CheskidovHolmOlsonTiti2005}).  Model \eqref{simp_Bardina} was investigated in \cite{Layton_Lewandowski2006, Cao_Lunasin_Titi2006}.  In \cite{Cao_Lunasin_Titi2006}, the authors proved the global existence and uniqueness of solutions in both the viscous ($\nu>0$) and inviscid ($\nu=0$) cases.  In particular, the authors prove in \cite{Cao_Lunasin_Titi2006} that for initial data $u^{in}\in H^1(\Omega)$, system \eqref{EV}, below, has a unique solution $u\in C^1((-\infty,\infty),H^1(\Omega))$.  This result is of particular interest in the inviscid case since, as of yet, \eqref{EV} is the only $\alpha$-model for which global regularity in the inviscid case has been proven.  Furthermore, it was noted in \cite{Cao_Lunasin_Titi2006} that formally setting $\nu=0$ in \eqref{simp_Bardina} amounts to simply adding the term $-\alpha^2\partial_t \triangle u$ to the left-hand side of \eqref{NSE_mom} (with $\nu=0$), yielding
\begin{subequations}\label{EV}
   \begin{align}
   \label{EV_mom}
 -\alpha^2\partial_t \triangle u+\partial_tu +(u\cdot\nabla)u+\nabla p&=f,
\\\label{EV_con}
\nabla\cdot u &= 0,
\\\label{EV_int}
u(0)&=u^{in},
\end{align}
\end{subequations}
which we call the Euler-Voigt equations.  The boundary conditions are taken to be periodic, and the domain is the periodic unit torus, $\Omega = \nT^3:=[0,1]^3$.  Furthermore, we impose the mean-zero condition,
\begin{equation}\label{mean_zero}
    \int_{\nT^3} u^{in}\,dx=\int_{\nT^3} f\,dx=0,
\end{equation}
which implies that $\int_{\nT^3} u\,dx = 0$.

Remarkably, if one reintroduces the viscous term $\nu\triangle u$ to the right-hand side of \eqref{EV_mom}, the resulting equations happen to coincide with equations governing certain visco-elastic fluids known as Kelvin-Voigt fluids, which were first introduced and studied by A.P. Oskolkov \cite{Oskolkov_1973, Oskolkov_1982}.  These equations are known as the Navier-Stokes-Voigt equations, which is our reason for calling \eqref{EV} the Euler-Voigt equations.  They were proposed in \cite{Cao_Lunasin_Titi2006} as a regularization for either the Navier-Stokes (for $\nu>0$) or Euler (for $\nu=0$) equations, for small values of the regularization parameter $\alpha$.

In the presence of a physical boundary, and under the assumption of the no-slip boundary conditions $u|_{\partial\Omega}=0$, the Navier-Stokes-Voigt equations  (i.e., system \eqref{EV} with $\nu\triangle u$ added to the right-hand side of \eqref{EV_mom}), as an $\alpha$-model regularization for the three-dimensional Navier-Stokes equations, have a very attractive advantage over other $\alpha$-models and subgrid-scale models in that one does not need to impose any additional artificial (i.e.\ non-physical) boundary conditions to prove the global existence and uniqueness of strong solutions, as it has been pointed out in \cite{Cao_Lunasin_Titi2006}.
It is worth mentioning that the long-term dynamics and estimates for the global attractor of the three-dimensional Navier-Stokes-Voigt model was studied in \cite{Kalantarov_Titi_2009}.  Moreover, it was shown recently in \cite{ramos-titi-2009} that the statistical solutions (i.e., invariant probability measures) of the three-dimensional Navier-Stokes-Voigt equations converge, in a suitable sense, to a corresponding statistical solution (invariant probability measure) of the three-dimensional Navier-Stokes equations.
Furthermore, in the context of numerical computations, the Navier-Stokes-Voigt system appears to have less stiffness than the Navier-Stokes system (see, e.g., \cite{Ebrahimi_Holst_Lunasin_2009, LevantRamosTiti2009}).

In \cite{LevantRamosTiti2009}, the statistical properties of the Navier-Stokes-Voigt model were investigated numerically in the context of the Sabra shell phenomenological model of turbulence and were compared with the corresponding Navier-Stokes shell model.  In particular, it was observed that for values of the regularization parameter $\alpha$ smaller than the Kolmogorov dissipation length scale, the structure functions of both models obey the same power laws in the inertial range.  For values of $\alpha$ larger than this scale, two distinct regions associated with the inertial range of the energy spectrum for the Navier-Stokes-Voigt model were observed in \cite{LevantRamosTiti2009}; namely,  a region of low wave numbers obeying the Kolmogorov $k^{−2/3}$ power law, and a region of higher  wave numbers, where energy condensates.

Due to its simplicity, the Voigt $\alpha$-regularization is also well-suited to being applied to other hydrodynamic models, such as the two-dimensional surface quasi-geostrophic equations, demonstrated in \cite{Khouider_Titi2008}, and the three-dimensional magneto-hydrodynamic (MHD) equations, which we demonstrate in this contribution.  It is also worth mentioning that in the case of the inviscid Burgers equation, $u_t+uu_{x}=0$, this type of regularization leads to $-\alpha^2u_{xxt}+u_t+uu_x=0$, which is the well-known Benjamin-Bona-Mahony equation of water waves \cite{Benjamin_Bona_Mahony_1972}.

In the present paper, we focus on the Euler-Voigt equations subject to periodic boundary conditions.  Our results are organized as follows.  In Section \ref{sec:Pre}, well-known results are stated and standard notation is recalled.   In Section \ref{sec:HReg}, we extend the results of \cite{Cao_Lunasin_Titi2006} to prove higher-order  regularity for \eqref{EV} under periodic boundary conditions.  In Section \ref{sec:Gev}, we prove that the solutions enjoy spatial analyticity for $u^{in}$ analytic.  In Section \ref{sec:Blowup}, we prove that solutions to the Euler-Voigt equations converge, in some sense, to sufficiently regular solutions of the Euler equations, as $\alpha\maps0$, on any closed interval of time where the corresponding solutions of the three-dimensional Euler equations exist.  Moreover, we provide a criterion for blow-up of the Euler equations, based on the Voigt $\alpha$-regularization.  In Section \ref{sec:MHD}, we demonstrate the applicability of the Voigt $\alpha$-regularization to other fluid models by considering a similar $\alpha$-regularization of hydrodynamic models involving magnetism.  In particular, we prove the global existence of solutions for the inviscid irresistive MHD-Voigt regularization.  The theorems given in this paper also hold in $\nR^3$ (see, e.g., \cite{Oliver_Titi2001}), but for simplicity we work in $\nT^3$.  Our methods and results readily apply to the viscous (i.e., Navier-Stokes-Voigt) case as well.  For simplicity, we set the forcing $f$ equal to zero in all models considered, although our results hold with little additional effort, given a suitably smooth forcing term satisfying \eqref{mean_zero}.

\section{Preliminaries}\label{sec:Pre}
In this section, we introduce some preliminary material and notations which are commonly used in the mathematical study of fluids, in particular in the study of the Navier-Stokes equations (NSE).  For a more detailed discussion of these topics, we refer to \cite{Constantin_Foias1988,Temam_Functional1995,Temam_Th_Num2001,Foias_Manley_Rosa_Temam2001}.

Let $\mathcal{F}$ be the set of all vector-valued trigonometric polynomials with periodic domain $\nT^3:=[0,1]^3$.  We define the space of `test' functions to be
\[\mathcal{V}:=\set{\vphi\in\mathcal{F}:\nabla\cdot\vphi=0\text{ and  }\int_{\nT^3}\vphi(x)\,dx=0}.\]
We denote by $L^p$ and $H^m$ the usual Lebesgue and Sobolev spaces over $\nT^3$, and define $H$ and $V$ to be the closures of $\mathcal{V}$ in $L^2$ and $H^1$ respectively.  We define the inner products on $H$ and $V$ respectively by
\[(u,v)=\sum_{i=1}^3\int_{\nT^3} u_iv_i\,dx
\sand
((u,v))=\sum_{i,j=1}^3\int_{\nT^3}\pd{u_i}{x_j}\pd{v_i}{x_j}\,dx,
\]
and the associated norms $|u|=(u,u)^{1/2}$, $\|u\|=((u,u))^{1/2}$.  Note that $((\cdot,\cdot))$ is a norm due to the Poincar\'e inequality, \eqref{poincare}, below.
We denote by $V'$ the dual space of $V$.  The action of $V'$ on $V$ is denoted by $\ip{\cdot}{\cdot}\equiv \ip{\cdot}{\cdot}_{V'}$.  Note that we have the continuous embeddings
\begin{equation}\label{embed}
 V\hookrightarrow H\hookrightarrow V'.
\end{equation}
Moreover, by the Rellich-Kondrachov compactness theorem (see, e.g., \cite{Evans1998,Adams2003}), these embeddings are compact for bounded domains.

Let $X$ be a Banach space.  We denote by $L^p((a,b),X)$ the space of Bochner measurable functions $t\mapsto w(t)$, where $w(t)\in X$ for a.e. $t\in(a,b)$, such that the integral $\int_a^b\|w(t)\|_X^p\,dt$ is finite (see, e.g., \cite{Adams2003}). A similar convention is used for $C^k((a,b),X)$.  Abusing notation slightly, we write $w(\cdot)$ for the map $t\mapsto w(t)$.  In the same vein, we often write the vector-valued function $w(\cdot,t)$ as $w(t)$ when $w$ is a function of $x$ and $t$.

We denote by $P_\sigma:L^2\maps H$ the Leray-Helmholtz projection operator (i.e., the orthogonal projection onto solenoidal, i.e., divergence-free, vector spaces), and define the Stokes operator $A:=-P_\sigma\triangle$ with domain $\mathcal{D}(A):=H^2\cap V$.   In our case of periodic boundary conditions, it is known that $A=-\triangle$  (see, e.g., \cite{Constantin_Foias1988,Temam_Functional1995}).  $A^{-1}:H\maps H$ is a positive-definite, self-adjoint, compact operator, and therefore has
an orthonormal basis of eigenfunctions $\vphi_k$ corresponding to a
non-increasing sequence of eigenvalues (see, e.g.,
\cite{Constantin_Foias1988,Temam_Functional1995}).  We observe that $(I+\alpha^2A)^{-1}$ is a well-defined bounded operator and that $(I+\alpha^2A)$ and $(I+\alpha^2A)^{-1}$ are self-adjoint.  Furthermore, due to the periodic boundary conditions, partial derivatives of any order commute with $(I+\alpha^2A)$ and $(I+\alpha^2A)^{-1}$.  We label the
eigenvalues $\lambda_k$ of $A$ so that
$0<\lambda_1\leq\lambda_2\leq\lambda_3\leq\cdots$.  Notice that in
the case of periodic boundary conditions in the torus $\nT^3$ we
have $\lambda_1=(2\pi)^{-2}$. Furthermore, for all $w\in V$, we have
the Poincar\'e inequality
\begin{equation}\label{poincare}
   \|w\|_{L^2(\nT^3)}\leq\lambda_1^{-1/2} \|\nabla w\|_{L^2(\nT^3)}.
\end{equation}
Due to \eqref{poincare}, for $w\in \mathcal{D}(A)$, we have the norm equivalences
\begin{equation}\label{elliptic_reg}
   |A w|\cong\| w\|_{H^2}
   \sand
   \|\nabla w\|_{L^2(\nT^3)}\cong\|w\|.
\end{equation}

It will be convenient to use the standard notation
\begin{equation}\label{Bdef}
 B(w_1,w_2):=P_\sigma((w_1\cdot\nabla)w_2)
\end{equation}
for $w_1,w_2\in\mathcal{V}$.  We list several important properties of $B$ which can be found for example in \cite{Constantin_Foias1988, Foias_Manley_Rosa_Temam2001, Temam_Functional1995, Temam_Th_Num2001}.
\begin{lemma}\label{B:prop}
The operator $B$ defined in \eqref{Bdef} is a bilinear form which can be extended as a continuous map $B:V\times V\maps V'$.  Furthermore, for $w_1$, $w_2$, $w_3\in V$,
\begin{equation}\label{B:Alt}
 \ip{B(w_1,w_2)}{w_3}_{V'}=-\ip{B(w_1,w_3)}{w_2}_{V'},
\end{equation}
and therefore
\begin{equation}\label{B:zero}
 \ip{B(w_1,w_2)}{w_2}_{V'}=0.
\end{equation}

\end{lemma}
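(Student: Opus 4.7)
The plan is to prove the identities first on the dense subspace $\mathcal{V}$ of smooth divergence-free trigonometric polynomials with mean zero, and then extend them by continuity once the bound $\|B(w_1,w_2)\|_{V'}\leq C\|w_1\|\,\|w_2\|$ is established. I would begin by noting that for $w_1,w_2,w_3\in\mathcal{V}$, the fact that $w_3$ is divergence free and mean zero (so $P_\sigma w_3=w_3$) lets us write
\[
 \ip{B(w_1,w_2)}{w_3}_{V'} = \sum_{i,j=1}^{3}\int_{\nT^3} w_{1,i}\,(\partial_i w_{2,j})\,w_{3,j}\,dx,
\]
since the Leray projector is self-adjoint on $L^2$.

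Next I would carry out an integration by parts in $x_i$: because we are on $\nT^3$, there are no boundary contributions, and
\[
 \int_{\nT^3} w_{1,i}\,(\partial_i w_{2,j})\,w_{3,j}\,dx
 = -\int_{\nT^3}(\partial_i w_{1,i})\,w_{2,j}\,w_{3,j}\,dx
 -\int_{\nT^3} w_{1,i}\,w_{2,j}\,(\partial_i w_{3,j})\,dx.
\]
The first integral on the right vanishes by the incompressibility $\nabla\cdot w_1=0$, which gives \eqref{B:Alt} immediately on $\mathcal{V}$. Specializing to $w_3=w_2$ in \eqref{B:Alt} yields $\ip{B(w_1,w_2)}{w_2}_{V'} = -\ip{B(w_1,w_2)}{w_2}_{V'}$, and hence \eqref{B:zero}.

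For the continuity statement, I would apply Hölder's inequality with exponents $6,2,3$ (note $\tfrac16+\tfrac12+\tfrac13=1$) to obtain
\[
 \bigl|\ip{B(w_1,w_2)}{w_3}_{V'}\bigr|
 \leq \|w_1\|_{L^6}\,\|\nabla w_2\|_{L^2}\,\|w_3\|_{L^3},
\]
and then use the three-dimensional Sobolev embeddings $H^1(\nT^3)\hookrightarrow L^6(\nT^3)$ and $H^1(\nT^3)\hookrightarrow L^3(\nT^3)$, together with \eqref{elliptic_reg}, to conclude
\[
 \bigl|\ip{B(w_1,w_2)}{w_3}_{V'}\bigr| \leq C\,\|w_1\|\,\|w_2\|\,\|w_3\|.
\]
This proves that $B$ extends uniquely to a continuous bilinear map $V\times V\to V'$ with $\|B(w_1,w_2)\|_{V'}\leq C\|w_1\|\,\|w_2\|$; the identities \eqref{B:Alt} and \eqref{B:zero} then propagate from $\mathcal{V}$ to all of $V$ by density and by the continuity of each side in its arguments. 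The only delicate point is that the estimate is essentially sharp in three dimensions (one cannot place all three factors in $L^2$), so the main technical care goes into picking the Hölder triple that is compatible with the $H^1$-Sobolev embedding in $\nT^3$; everything else is integration by parts using periodicity and $\nabla\cdot w_1=0$.
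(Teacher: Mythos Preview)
Your argument is correct and is precisely the standard proof of this lemma. The paper does not actually supply a proof of this statement; it simply cites the references \cite{Constantin_Foias1988, Foias_Manley_Rosa_Temam2001, Temam_Functional1995, Temam_Th_Num2001}, where the argument you wrote (integration by parts on $\mathcal{V}$ using periodicity and $\nabla\cdot w_1=0$, followed by the H\"older/Sobolev estimate and extension by density) is exactly what one finds.
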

\noindent Here and below, $C, c_i$, etc. denote generic constants which may change from line to line.  $C_\alpha, C_\alpha',C(\cdots)$, etc. denote generic constants which depend only upon the indicated parameters.

Next, we recall Agmon's inequality (see, e.g., \cite{Adams2003, Agmon1965, Constantin_Foias1988}).  For $w\in\mathcal{D}(A)$ we have
\begin{equation}\label{Agmon1/2}
 \|w\|_{L^\infty(\Omega)} \leq C\|w\|^{1/2}|Aw|^{1/2}\:\:.
\end{equation}
Finally, we note a result of deRham \cite{XWang1993, Temam_Th_Num2001}, which states that if $g$ is a locally integrable function (or more generally, a distribution), we have
\begin{equation}\label{deRham}
 g =\nabla p \text{ for some distribution $p$ iff } \ip{g}{v}=0\tabM \forall v\in\mathcal{V}.
\end{equation}
Using this theorem, it can be shown (see, e.g., \cite{Constantin_Foias1988,Temam_Th_Num2001}) that \eqref{NSE} with $\nu=0$, $f=0$, and periodic boundary conditions, is equivalent to the functional equation
\begin{equation}\label{fun_Euler}
 \frac{du}{dt}+B(u,u) =0,
\end{equation}
and that \eqref{EV} is equivalent to
\begin{equation}\label{fun_Euler_Voigt}
 (I+\alpha^2A)\frac{du}{dt}+B(u,u) =0,
\end{equation}
where the last equality is understood to hold in the sense of $V'$.  Finally, we define the notion of a  solution to \eqref{fun_Euler_Voigt} which was given in \cite{Cao_Lunasin_Titi2006}.
\begin{definition}\label{def_sol}
   Let $u^{in}\in V$ and consider a time interval $(T_1,T_2)$ with $T_1\leq0<T_2$.  A function $u\in C^1((T_1,T_2),V)$ is said to be a solution to \eqref{fun_Euler_Voigt} if it satisfies \eqref{fun_Euler_Voigt} in the sense of $V'$ and furthermore $u(0)=u^{in}$.
\end{definition}

\section{$H^m$ Regularity}\label{sec:HReg}
In this section we show that solutions to \eqref{EV}, or equivalently \eqref{fun_Euler_Voigt}, are globally well-posed and have as much smoothness as the initial data.  However, we do not expect the solutions to instantaneously gain additional smoothness, even in the viscous (Navier-Stokes-Voigt) case.  The reason for this is that adding the term $-\alpha^2\partial_t \triangle u$ to the Navier-Stokes equations destroys their parabolic structure (indeed, the Navier-Stokes-Voigt equations are well-posed backwards in time \cite{Cao_Lunasin_Titi2006}, a feature which is not present in any parabolic equation).  Instead, the Navier-Stokes-Voigt system behaves like a damped hyperbolic (pseudo-parabolic) system \cite{Kalantarov_Titi_2009}.  Despite this, the Navier-Stokes-Voigt equations (with analytic forcing) possess a finite dimensional global attractor comprised of analytic functions.  For details on these matters, we refer to \cite{Kalantarov-1986, Kalantarov_Titi_2009, Kalantarov_Levant_Titi_2009}.  With these considerations in mind, we now state the following theorem.

\begin{theorem}\label{thm:regularity}
Let $u^{in}\in H^m(\nT^3)\cap V$, for $m\geq 1$.  Then there exists a unique solution $u$ of \eqref{fun_Euler_Voigt} (with $f=0$, for simplicity) with $u\in C^1((-\infty,\infty),H^m(\nT^3)\cap V)$.  Moreover,
\[
\|u(t)\|_{H^m(\nT^3)}\leq C(\alpha,\|u^{in}\|_{H^m})(1+|t|)^{p(m)}.
\]
for all $t\in(-\infty,\infty)$, where $p(1) = 1$, $p(2) = 2$ and $p(m) = 5\pnt{\frac{3}{2}}^{m-3}-1$ for $m\geq3$.
\end{theorem}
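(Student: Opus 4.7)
The plan is to prove the theorem by induction on $m$, using Galerkin approximation together with a priori bounds uniform in the Galerkin truncation level. Let $P_N$ denote the orthogonal projection onto $\operatorname{span}\{\vphi_1,\dots,\vphi_N\}$. The finite-dimensional Galerkin system
\[
(I+\alpha^2A)\frac{du_N}{dt}+P_N B(u_N,u_N)=0,\qquad u_N(0)=P_N u^{in},
\]
is a locally Lipschitz ODE on $P_N H$ and so has a unique local solution. The $N$-uniform bounds derived below will extend $u_N$ globally in forward time; the same estimates apply under $t\mapsto -t$, yielding solutions on $(-\infty,\infty)$. An Aubin--Lions--Simon compactness argument would then produce a subsequential limit $u\in C^1((-\infty,\infty),H^m\cap V)$ solving \eqref{fun_Euler_Voigt} in the sense of Definition \ref{def_sol}.

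For the base case $m=1$, I would test with $u_N$ and apply \eqref{B:zero} to obtain the conservation $\frac{d}{dt}\pnt{|u_N|^2+\alpha^2\|u_N\|^2}=0$, yielding $\|u_N(t)\|_{H^1}\leq\|u^{in}\|_{H^1}=:M$ uniformly, which satisfies $p(1)=1$. For $m=2$, testing with $Au_N$ gives
\[
\tfrac12\tfrac{d}{dt}\pnt{\|u_N\|^2+\alpha^2|Au_N|^2}=-(B(u_N,u_N),Au_N).
\]
The right-hand side is dominated by $\|u_N\|_{L^\infty}\|u_N\||Au_N|$, and Agmon \eqref{Agmon1/2} combined with the $H^1$ bound produces $|(B(u_N,u_N),Au_N)|\leq CM^{3/2}|Au_N|^{3/2}$. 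Setting $\mathcal{E}_2:=\|u_N\|^2+\alpha^2|Au_N|^2$ and using $|Au_N|\leq(\mathcal{E}_2/\alpha^2)^{1/2}$, this becomes $\mathcal{E}_2'\leq C_{M,\alpha}\mathcal{E}_2^{3/4}$, which integrates to $\mathcal{E}_2(t)\leq C(1+|t|)^4$ and hence $\|u_N(t)\|_{H^2}\leq C(1+|t|)^2$, giving $p(2)=2$.

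The inductive step for $m\geq 3$ will use the formulation $\partial_t u_N=-(I+\alpha^2A)^{-1}P_N B(u_N,u_N)$ together with the fact that $(I+\alpha^2A)^{-1}$ gains two derivatives with operator norm $\leq C/\alpha^2$ on the Sobolev scale (which follows from the identity $\alpha^2 A v=g-v$ tested in $H^{m-2}$). Integrating in time yields
\[
\|u_N(t)\|_{H^m}\leq\|u^{in}\|_{H^m}+\frac{C}{\alpha^2}\int_0^{|t|}\|B(u_N(s),u_N(s))\|_{H^{m-2}}\,ds,
\]
and I would bound the integrand via tame Sobolev product estimates of the form $\|fg\|_{H^s}\leq C\pnt{\|f\|_{L^\infty}\|g\|_{H^s}+\|g\|_{L^\infty}\|f\|_{H^s}}$, Agmon, and Gagliardo--Nirenberg interpolation, reducing everything to $\|u_N\|_{H^k}$ with $k\leq m-1$. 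Invoking the inductive hypothesis then provides a polynomial-in-$s$ bound $\|B(u_N,u_N)(s)\|_{H^{m-2}}\leq C(1+s)^{q(m)}$, whose time integral gives $\|u_N(t)\|_{H^m}\leq C(1+|t|)^{q(m)+1}$, with $q(m)+1$ matching the stated recursion $p(m)=5(3/2)^{m-3}-1$. Uniqueness will follow by testing the difference equation for $\tilde u:=u-w$ against $\tilde u$: using \eqref{B:Alt}, the nonlinearity is controlled via $|(B(\tilde u,u),\tilde u)|=|(B(\tilde u,\tilde u),u)|\leq C\|\tilde u\|^2\|u\|$, which yields Gronwall control of $|\tilde u|^2+\alpha^2\|\tilde u\|^2$ and hence $\tilde u\equiv 0$.

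The hard part will be the careful bookkeeping in the inductive step for $m\geq 3$: one must choose tame estimates and interpolation identities whose exponents propagate cleanly through the induction so that the recursion closes with the geometric growth $(3/2)^{m-3}$, while avoiding self-referential bounds that would place $\|u_N\|_{H^m}$ on the right-hand side with a coefficient too large to be absorbed.
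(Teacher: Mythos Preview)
Your strategy is sound and will prove the theorem, but it is genuinely different from the paper's argument, and one specific claim you make is not quite right.

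The paper (in both of its proofs) obtains the $H^m$ bound by an \emph{energy method}: it tests the equation against $\partial^\beta u$ with $|\beta|\le m-1$, exploits the cancellation $(B(u,\partial^\beta u),\partial^\beta u)=0$, and arrives at the differential inequality
\[
\frac{d}{dt}\bigl(\alpha^2\|\nabla u\|_{H^{m-1}}^2+\|u\|_{H^{m-1}}^2\bigr)\le C\|u\|_{H^{m-1}}^3,
\]
whose integration yields the recursion $p(m)=\tfrac{3p(m-1)+1}{2}$ and hence $p(m)=5(3/2)^{m-3}-1$. (For $m=2,3$ the paper does separate, sharper estimates using Agmon's inequality to keep a fractional power of the top norm on the right.) Your route instead integrates the mild formulation $u_N(t)=P_Nu^{in}-\int_0^t (I+\alpha^2A)^{-1}P_NB(u_N,u_N)\,ds$ in $H^m$, trading the cancellation $(B(u,\partial^\beta u),\partial^\beta u)=0$ for the two-derivative gain of $(I+\alpha^2A)^{-1}$. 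This works, and for $m=3$ a careful estimate such as $\|B(u,u)\|_{H^1}\le C\|u\|_{H^1}^{1/2}\|u\|_{H^2}^{3/2}$ indeed gives $p(3)=4$.

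However, your assertion that the resulting recursion ``closes with the geometric growth $(3/2)^{m-3}$'' is incorrect. With the tame product estimate and Agmon you will obtain, for $m\ge4$, a bound of the form $\|B(u,u)\|_{H^{m-2}}\le C\|u\|_{H^1}^{1/2}\|u\|_{H^2}^{1/2}\|u\|_{H^{m-1}}+C\|u\|_{H^2}^{1/2}\|u\|_{H^3}^{1/2}\|u\|_{H^{m-2}}$, leading to a recursion of type $p(m)\le p(m-1)+2$, i.e.\ \emph{linear} growth $p(m)\sim 2m$. This is actually \emph{better} than the paper's stated $5(3/2)^{m-3}-1$ for large $m$, so the theorem is still proved; but you should not expect to reproduce the paper's exponent exactly. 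If instead you use only the algebra property $\|B(u,u)\|_{H^{m-2}}\le C\|u\|_{H^{m-1}}^2$, you get $p(m)=2p(m-1)+1$, which is \emph{worse} than the paper's exponent---so the interpolation must be done carefully. Either way, the bookkeeping does not naturally produce $(3/2)^{m-3}$; that ratio is specific to the energy-method inequality $\frac{d}{dt}\mathcal{E}_m\le C\mathcal{E}_{m-1}^{3/2}$ with $\mathcal{E}_m\sim\|u\|_{H^m}^2$.
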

We give two proofs of Theorem \eqref{thm:regularity}.  Proof 1 is based on the contraction method, which demonstrates the ODE nature of \eqref{EV}.  Proof 2 uses the Galerkin approximation procedure, which is essential for proving Theorem \ref{thm:gev} below, and furthermore is of interest from the point of view of numerical analysis (see, e.g., \cite{Constantin_Foias1988, Foias_Manley_Rosa_Temam2001, Temam_Functional1995, Temam_Th_Num2001,Marchioro_Pulvirenti_1994} for detailed discussions of this method).

\begin{proof}[Proof 1]  The existence and uniqueness of solutions in the sense of Definition \ref{def_sol} have been established in \cite{Cao_Lunasin_Titi2006}.   We prove only the higher-order regularity.    Applying $(I+\alpha A)^{-1}$ to \eqref{fun_Euler_Voigt} (the inverse taken with respect to the periodic boundary conditions), we obtain
\begin{align}\label{EV_ODE}
 \frac{du}{dt} = (I+\alpha^2A)^{-1}B(u,u):=N(u).
 \end{align}
We claim that $N\!:H^m\cap V\maps H^m\cap V$ is locally Lipschitz continuous for each $m\geq1$.  The case $m=1$ was proven in \cite{Cao_Lunasin_Titi2006}.  We prove the remaining cases $m\geq2$.  Let $u_1,u_2\in H^m\cap V$ be arbitrary and write $\delta u = u_2-u_1$.  We have
\begin{align}
   &\phantom{{}=}\nonumber
   \|N(u_1)-N(u_2)\|_{H^m}
   \\&=\nonumber
   \sum_{0\leq|\beta|\leq m}\sup_{\|\vphi\|_{H^m}=1}
      \pair{\partial^\beta(I+\alpha^2A)^{-1}(B(u_1,u_1)-B(u_2,u_2))}
         {\partial^\beta\vphi}
   \\&=\nonumber
   \sum_{0\leq|\beta|\leq m}\sup_{\|\vphi\|_{H^m}=1}
      \pair{\partial^\beta(B(\delta u,u_1)+B(u_2,\delta u))}
         {\partial^\beta(I+\alpha^2A)^{-1}\vphi}
  \\&=\nonumber
    \sup_{\|\vphi\|_{H^m}=1}
   \sum_{\substack{0\leq|\beta|\leq m\\0\leq\gamma\leq \beta}}
    \binom{\beta}{\gamma}
      \pair{B(\partial^{\gamma}\delta u,\partial^{\beta-\gamma}u_1)
      +B(\partial^{\gamma}u_2,\partial^{\beta-\gamma}\delta u)}
         {\partial^\beta(I+\alpha^2A)^{-1}\vphi}.
\end{align}
In the case $\gamma=0$, we have for $|\beta|\leq m$,
\begin{align}
&\phantom{{}=}\nonumber
   \pair{B(\delta u,\partial^{\beta}u_1)}
         {\partial^\beta(I+\alpha^2A)^{-1}\vphi}
=
   -\pair{B(\delta u,\partial^\beta(I+\alpha^2A)^{-1}\vphi)}
         {\partial^{\beta}u_1}
\\&\leq\nonumber
   \|\delta u\|_{L^\infty(\nT^3)}
   \|\nabla \partial^\beta(I+\alpha^2A)^{-1} \vphi \|_{L^2(\nT^3)}
   \|\partial^{\beta}u_1\|_{L^2(\nT^3)}
\\&\leq\nonumber
C_\alpha\|\delta u\|_{H^{2}(\nT^3)}\|\vphi\|_{H^{m-1}(\nT^3)}\|u_1\|_{H^{m}(\nT^3)}.
\end{align}
Similarly,
\begin{align}
&\phantom{{}=}\nonumber
   \pair{B(u_2,\partial^{\beta}\delta u)}
         {\partial^\beta(I+\alpha^2A)^{-1}\vphi}
\leq
C_\alpha\|u_2\|_{H^{2}(\nT^3)}\|\vphi\|_{H^{m-1}(\nT^3)}\|\delta u\|_{H^{m}(\nT^3)}.
\end{align}
In the case $|\gamma|>0$, we have  for $|\beta|\leq m$, $\gamma\leq\beta$,
\begin{align}
&\phantom{{}=}\nonumber
   \pair{B(\partial^{\gamma}\delta u,\partial^{\beta-\gamma}u_1)}
         {\partial^\beta(I+\alpha^2A)^{-1}\vphi}
\\&\leq\nonumber
\|\partial^{\gamma}\delta u\|_{L^2(\nT^3)}
\|\nabla\partial^{\beta-\gamma}u_1\|_{L^2(\nT^3)}\|\partial^\beta(I+\alpha^2A)^{-1}\vphi\|_{L^\infty(\nT^3)}
\\&\leq\nonumber
C_\alpha\|\delta u\|_{H^{m}(\nT^3)}\|u_1\|_{H^{m}(\nT^3)}\|\vphi\|_{H^{m}(\nT^3)}.
\end{align}
Similarly,
\begin{align}
&\phantom{{}=}\nonumber
   \pair{B(\partial^{\gamma}u_2,\partial^{\beta-\gamma}\delta u)}
         {\partial^\beta(I+\alpha^2A)^{-1}\vphi}
\leq
C_\alpha\|u_2\|_{H^{2}(\nT^3)}\|\delta u\|_{H^{m}(\nT^3)}\|\vphi\|_{H^{m}(\nT^3)}.
\end{align}
Combining the above estimates, we have for $m\geq 2$,
\begin{align}\label{EV_Lip}
   \|N(u_1)-N(u_2)\|_{H^m}
   \leq C_{\alpha,m}\pnt{\|u_1\|_{H^{m}(\nT^3)}+\|u_2\|_{H^{m}(\nT^3)}}\|\delta u\|_{H^{m}(\nT^3)}.
\end{align}
Choose $R$ such that $\|u_1\|_{H^{m}(\nT^3)},\|u_2\|_{H^{m}(\nT^3)}< R$.  Then \eqref{EV_Lip} shows that $N$ is locally Lipschitz in $H^m(\nT^3)\cap V$, with Lipscitz constant $2RC_{\alpha,m}$ in the ball of radius $R$ centered at the origin.  Let $u^{in}\in H^m(\nT^3)$.  By the Picard-Lindel\"of Theorem, there exists a time $T>0$ such that \eqref{EV_ODE} has a solution in $C^1([-T,T],H^m(\nT^3)\cap V)$.  Let $[0,T^*_m)$ be the maximal positive interval of existence.  (For the sake of clarity, we work only in the positive time case, $t\geq0$; however, the same proof holds for $t\leq0$.)  All of the work below takes place on the interval $[0,T^*_m)$.  If $T^*_m<\infty$, then $\limsup_{t\uparrow T^*_m}\|u\|_{H^m}=\infty$, but as we will show below, $u$ is bounded in $C([0,T^*_m),H^m(\nT^3)\cap V)$, and thus we will have $T^*_m=\infty$.    The remainder of the proof is divided into four steps.  The cases $m=1,2,3$ are treated sequentially.  The general case is then shown inductively.

\smallskip

\noindent\textbf{1.} For the case $m=1$, we take the inner product of \eqref{EV_mom} with $u$, integrate by parts, and employ \eqref{deRham} to arrive at
\begin{equation*}
\frac{1}{2}\frac{d}{dt}\pnt{\alpha^2\|\nabla u\|_{L^2(\nT^3)}^2+\|u\|_{L^2(\nT^3)}^2}=0,
\end{equation*}
Integrating this inequality gives
\begin{equation}\label{H1}
\alpha^2\|\nabla u(t)\|_{L^2(\nT^3)}^2+\|u(t)\|_{L^2(\nT^3)}^2
=
\alpha^2\|\nabla u_0\|_{L^2(\nT^3)}^2+\|u_0\|_{L^2(\nT^3)}^2,
\end{equation}
which implies $\|u(t)\|_{H^1(\nT^3)}\leq C_\alpha=C(\alpha,\|u^{in}\|)$, and thus $T^*_1=\infty$.  Note that \eqref{H1} was essentially obtained in \cite{Cao_Lunasin_Titi2006} to show global existence of solutions to \eqref{EV} (equivalently \eqref{fun_Euler_Voigt}).

\smallskip

\noindent\textbf{2.} For the case $m=2$, we take the inner product with $-\triangle u$ and integrate by parts to obtain
\begin{equation}\label{H2e1}
\frac{1}{2}\frac{d}{dt}\pnt{\alpha^2\|\triangle u\|_{L^2(\nT^3)}^2+\|\nabla u\|_{L^2(\nT^3)}^2}=\pair{(u\cdot\nabla)u}{\triangle u},
\end{equation}
where we have used \eqref{deRham}.  We estimate the right-hand side using the H\"older's inequality and Agmon's inequality.
\begin{align*}
 |\pair{(u\cdot\nabla)u}{\triangle u}|
& \leq
\int_{\nT^3} |u||\nabla u| |\triangle u|\,dx
\\& \leq
\|u\|_{L^\infty(\nT^3)}\|\nabla u\|_{L^2(\nT^3)}\|\triangle u\|_{L^2(\nT^3)}
\\& \leq
C\|u\|_{H^1(\nT^3)}^{1/2}\|u\|_{H^2(\nT^3)}^{1/2}
\|\nabla u\|_{L^2(\nT^3)}\|\triangle u\|_{L^2(\nT^3)}
\\& \leq
C\|u\|_{H^1(\nT^3)}^{3/2}\|u\|_{H^2(\nT^3)}^{3/2}
\\& \leq
C_\alpha\|u\|_{H^2(\nT^3)}^{3/2},
\end{align*}
thanks to \eqref{H1}.  Using this estimate in \eqref{H2e1} and employing \eqref{elliptic_reg} gives
\begin{equation}\label{H2e2}
\frac{d}{dt}\pnt{\alpha^2\|\triangle u\|_{L^2(\nT^3)}^2+\|\nabla u\|_{L^2(\nT^3)}^2}
\leq
C_\alpha\pnt{\alpha^2\|\triangle u\|_{L^2(\nT^3)}^2+\|\nabla u\|_{L^2(\nT^3)}^2}^{3/4}.
\end{equation}
Thus, using Gr\"onwall's inequality and the norm equivalence \eqref{elliptic_reg}, we have the algebraic growth rate in time
\begin{equation}\label{H2growth}
 \|u(t)\|_{H^2(\nT^3)}\leq
C_\alpha\pnt{\|u^{in}\|_{H^2(\nT^3)}^{1/2}+t}^2,\text{ for all }t\in[0,T].
\end{equation}
Thus, $T^*_2=\infty$.
\smallskip

\noindent\textbf{3.} For the case $m=3$, we take the inner product with $\triangle^2 u$, integrate by parts and use \eqref{deRham} to obtain
\begin{equation}\label{H3e1}
\frac{1}{2}\frac{d}{dt}\pnt{\alpha^2\|\nabla\triangle u\|_{L^2(\nT^3)}^2+\|\triangle u\|_{L^2(\nT^3)}^2} =-\pair{(u\cdot\nabla)u}{\triangle^2 u}.\end{equation}
For the right-hand side, we integrate by parts twice to obtain
{\allowdisplaybreaks
\begin{align*}
&\phantom{{}=}\abs{\int_{\nT^3} [(u\cdot\nabla) u]\cdot\triangle^2 u\,dx}
\\&=
\abs{\int_{\nT^3} [\triangle((u\cdot\nabla) u)]\cdot\triangle u\,dx}
\\&=
\abs{\int_{\nT^3} \Big[(\triangle u\cdot\nabla) u+(u\cdot\nabla)\triangle u +2\sum_{i=1}^3\partial_i u\cdot\nabla\partial_i u\Big]\cdot\triangle u\,dx}
\\&=
\abs{\int_{\nT^3} [(\triangle u\cdot\nabla) u]\cdot\triangle u \,dx
+ 2\sum_{i=1}^3\int_{\nT^3} (\partial_i u\cdot\nabla\partial_i u)\cdot\triangle u\,dx}
\\&\leq
\|\triangle u\|_{L^2({\nT^3})}^2\|\nabla u\|_{L^\infty({\nT^3})}
\\&\;\;\;+
2\|\nabla u\|_{L^\infty({\nT^3})}\|\nabla\nabla u\|_{L^2({\nT^3})}\|\triangle u\|_{L^2({\nT^3})}
\\&\leq
C\|u\|_{H^2({\nT^3})}^2\|\nabla u\|_{L^\infty({\nT^3})}
\\&\leq
C\|u\|_{H^2({\nT^3})}^2\|\nabla u\|_{H^1({\nT^3})}^{1/2} \|\nabla u\|_{H^2({\nT^3})}^{1/2}
\\&\leq
C\| u\|_{H^2({\nT^3})}^{5/2} \| u\|_{H^3({\nT^3})}^{1/2}
\end{align*}
}
on $[0,T]$.  For the third equality above, we have employed the fact that \\ $\int_{\nT^3} ((u\cdot\nabla)\triangle u)\cdot \triangle u\,dx=0$, which follows from \eqref{B:zero} since $\nabla\cdot u=0$.  The first inequality is due to H\"older's inequality, and the third is due to Agmon's inequality \eqref{Agmon1/2}.
Using the above estimate and \eqref{H2growth} in \eqref{H3e1} gives
\begin{equation}\label{H3e2}
\frac{d}{dt}\pnt{\alpha^2\|\nabla\triangle u\|_{L^2(\nT^3)}^2+\|\triangle u\|_{L^2(\nT^3)}^2}
\leq
C'_\alpha\pnt{C_\alpha t+\|u^{in}\|_{H^2({\nT^3})}^{1/2}}^5\|u\|_{H^3({\nT^3})}^{1/2}.
\end{equation}
For the last factor on the right-hand side of \eqref{H3e2}, one can use the norm equivalence \eqref{elliptic_reg} to show that
\begin{equation}\label{H3_equiv}
\|u\|_{H^3({\nT^3})}^{1/2}\leq C_\alpha\pnt{\alpha^2\|\nabla\triangle u\|_{L^2(\nT^3)}^2+\|\triangle u\|_{L^2(\nT^3)}^2}^{1/4}.
\end{equation}
Combining \eqref{H3e2} and \eqref{H3_equiv}, we may apply Gr\"onwall's inequality and again the norm equivalence \eqref{elliptic_reg} to find the algebraic growth rate in time
\begin{align}\label{H3growth}
\nonumber \|u\|_{H^3({\nT^3})}
&\leq\Big[
\|u^{in}\|_{H^3({\nT^3})}^{3/2}+C''_\alpha(C_\alpha t +\|u^{in}\|_{H^2({\nT^3})})^6
\Big]^{2/3}
\\&\leq C_\alpha\Big[
\|u^{in}\|_{H^3({\nT^3})}+(t+\|u^{in}\|_{H^2({\nT^3})})^4
\Big].
\end{align}
Thus $T^*_3=\infty$.

\smallskip

\noindent\textbf{4.} For the remaining cases $m\geq 4$, we work by induction on $m$.  That is, we show that if the $H^{m-1}$ norm is bounded, then so is the $H^m$ norm.  We follow closely the techniques used in \cite{Marchioro_Pulvirenti_1994} (see also \cite{Majda_Bertozzi_2002, Temam1976}).  Let $\beta$ be a multi-index such that $|\beta|\leq m-1$.  We apply $\partial^\beta$ to both sides of \eqref{EV_mom}, take the inner product of \eqref{EV_mom} with $\partial^\beta u$ and use \eqref{deRham} to obtain
\begin{align}\label{Hme1}
\nonumber &\phantom{={}}\frac{1}{2}\frac{d}{dt}
\pnt{\alpha^2\|\nabla \partial^\beta u\|_{L^2({\nT^3})}^2+\|\partial^\beta u\|_{L^2({\nT^3})}^2}
\\\nonumber& =
-\pair{\partial^\beta((u\cdot\nabla)u)}{\partial^\beta u}.
\\\nonumber& =
-\sum_{0\leq\gamma\leq\beta}\binom{\beta}{\gamma}\int_{\nT^3}
[(\partial^\gamma u\cdot\nabla)\,\partial^{\beta-\gamma}u]\cdot\partial^\beta u\,dx
\\& =
-\sum_{0<\gamma\leq\beta}\binom{\beta}{\gamma}\int_{\nT^3}
[(\partial^\gamma u\cdot\nabla)\,\partial^{\beta-\gamma}u]\cdot\partial^\beta u\,dx
\end{align}
since in the case $\gamma=0$ we have $\int_{\nT^3}
[(u\cdot\nabla)\,\partial^{\beta}u]\cdot\partial^\beta u\,dx=0$, again by the divergence free condition \eqref{B:zero}.  We estimate the integrals in \eqref{Hme1}, analyzing the cases $|\gamma|=1$, $|\gamma|=2$ and $|\gamma|\geq 3$ separately.

For $|\gamma|=1$, $\gamma\leq\beta$, we have
\begin{align*}
 \int_{\nT^3} [(\partial^\gamma u\cdot\nabla)\,\partial^{\beta-\gamma}u]\cdot\partial^\beta u\,dx
&\leq
\|\partial^\gamma u\|_{L^\infty({\nT^3})}
\|\nabla\partial^{\beta-\gamma} u\|_{L^2({\nT^3})}
\|\partial^\beta u\|_{L^2({\nT^3})}
\\&\leq
C\|u\|_{H^{|\gamma|+2}}\|u\|^2_{H^{|\beta|}}
=
C\|u\|_{H^3}\|u\|^2_{H^{|\beta|}}.
\end{align*}

For $|\gamma|=2$, $\gamma\leq\beta$, we have
\begin{align*}
 \int_{\nT^3} [(\partial^\gamma u\cdot\nabla)\,\partial^{\beta-\gamma}u]\cdot\partial^\beta u\,dx
&\leq
\|\partial^\gamma u\|_{L^6({\nT^3})}
\|\nabla\partial^{\beta-\gamma} u\|_{L^3({\nT^3})}
\|\partial^\beta u\|_{L^2({\nT^3})}
\\&\leq
C\|\partial^\gamma u\|_{H^1({\nT^3})}
\|\nabla\partial^{\beta-\gamma} u\|_{H^1({\nT^3})}
\|\partial^\beta u\|_{L^2({\nT^3})}
\\&\leq
C\|u\|_{H^3({\nT^3})}
\|u\|_{H^{|\beta|-1}({\nT^3})}
\|u\|_{H^{|\beta|}({\nT^3})}
\\&\leq
C\|u\|_{H^3({\nT^3})}
\|u\|_{H^{|\beta|}({\nT^3})}^2.
\end{align*}

For $|\gamma|\geq3$, $\gamma\leq\beta$, we have
\begin{align*}
 \int_{\nT^3} [(\partial^\gamma u\cdot\nabla)\,\partial^{\beta-\gamma}u]\cdot\partial^\beta u\,dx
&\leq
\|\partial^\gamma u\|_{L^2({\nT^3})}
\|\nabla\partial^{\beta-\gamma} u\|_{L^\infty({\nT^3})}
\|\partial^\beta u\|_{L^2({\nT^3})}
\\&\leq
C\| u\|_{H^{|\gamma|}({\nT^3})}
\|\nabla\partial^{\beta-\gamma} u\|_{H^2({\nT^3})}
\|\partial^\beta u\|_{L^2({\nT^3})}
\\&\leq
C\| u\|_{H^{|\gamma|}({\nT^3})}
\|u\|_{H^{|\beta|}({\nT^3})}^2.
\end{align*}
These estimates imply that the right-hand side of \eqref{Hme1} is bounded above by \\ $C(\|u\|_{H^3({\nT^3})} +\|u\|_{H^{|\beta|}({\nT^3})})\|u\|_{H^{|\beta|}({\nT^3})}^2 \leq C\|u\|_{H^{m-1}({\nT^3})}^3$.  Using this with \eqref{Hme1} and summing over all $\beta$ with $|\beta|\leq m-1$ gives
\begin{equation}\label{Hme2}
\frac{d}{dt}\pnt{\alpha^2\|\nabla u\|_{H^{m-1}({\nT^3})}^2+\|u\|_{H^{m-1}({\nT^3})}^2}\leq C\|u\|^3_{H^{m-1}({\nT^3})}.
\end{equation}
Integrating this inequality and using the norm equivalence \eqref{elliptic_reg}, we find the following recursive relationship for $m\geq4$:
\begin{equation}\label{HmgrowthRec}
\|u(t)\|_{H^{m}({\nT^3})}
\leq C_\alpha\pnt{  \|u^{in}\|^2_{H^{m}({\nT^3})}+\int_0^t\|u(s)\|^3_{H^{m-1}({\nT^3})}\,ds
}^{1/2}.
\end{equation}
Thus $T^*_m=\infty$.  Repeatedly iterating estimate \eqref{HmgrowthRec} and eventually using \eqref{H3growth}, we see that $\|u(t)\|_{H^{m}({\nT^3})}$ grows at most algebraically in $t$, that is
\begin{equation}\label{Hmgrowth}
 \|u(t)\|_{H^{m}({\nT^3})}
\leq C(\alpha,\|u^{in}\|_{H^m({\nT^3})})(1+t)^{p(m)},\text{ for all }t\in[0,\infty).
\end{equation}
for some algebraic growth rate $p(m)$.  Using induction and \eqref{HmgrowthRec}, we have   $p(m)=\pnt{3p(m-1)+1}/2$ for $m\geq 4$.  From $\eqref{H3growth}$, we have $p(3)=4$.  Solving this difference equation, we find $p(m) = 5\pnt{3/2}^{m-3}-1$, as claimed.
\end{proof}

   \begin{proof}[Proof 2]
   For each $N\in\nN$, let $P_N$ denote the orthogonal projection in $H$ given by $P_N:H\maps\text{span}\set{\vphi_k}_{|k|\leq N}:=H_N$ ($\vphi_k$ are defined in Section \ref{sec:Pre}).  The Galerkin approximation to \eqref{fun_Euler_Voigt} at level $N$ is given by the following system:
   \begin{subequations}\label{Gal_EV}
   \begin{align}
   \label{Gal_EV_mo}
 (I+\alpha^2 A)\frac{d}{dt}u_N&=-P_NB(u_N,u_N),
\\
\label{Gal_EV_IC}
u_N(0)&=P_N u^{in}.
\end{align}
\end{subequations}
Applying the operator $(I+\alpha^2 A)^{-1}$ to \eqref{Gal_EV_mo}, we see that this is an ODE in the finite-dimensional space $H_N$ with quadratic non-linearity, and therefore has a solution $u_N\in C^1((-T_N,T_N),H_N)$ for some $T_N>0$.  Thus, taking the inner product of \eqref{Gal_EV_mo} with $u_N$ and integrating by parts is justified, and we arrive at
\begin{equation}\label{H1_Galerkin}
\alpha^2\|\nabla u_N(t)\|_{L^2(\nT^3)}^2+\|u_N(t)\|_{L^2(\nT^3)}^2
\leq
\alpha^2\|\nabla u_0\|_{L^2(\nT^3)}^2+\|u_0\|_{L^2(\nT^3)}^2.
\end{equation}
This implies that $T_N=\infty$ for all $N\geq1$.   Let $T>0$ be fixed but arbitrary.  For simplicity, we work on the interval $[0,T]$, but the same proof holds on $[-T,0]$.  Following ideas similar to the case of the 3D Euler equations (see, e.g., \cite{Marchioro_Pulvirenti_1994}), we show that  $\set{u_N}_{N\in\nN}$ is a Cauchy sequence in $C([0,T],V)$.  Let $u^{in}\in V$, and let $u_N$ and $u_M$ be solutions of \eqref{Gal_EV} for $N,M\in\nN$, $N<M$ respectively, and consider $w_M^N:=u_N-u_M\in H_M$.  We will need the following inequalities, which follow easily from Parseval's identity.
\begin{align*}
   \|u_N\|_{H^2(\nT^3)}\leq CN \|u_N\|_{H^1(\nT^3)}
   \quad\text{and}\quad
   \|(I-P_N)u_M\|_{L^2(\nT^3)}\leq CN^{-1}\|u_M\|_{H^1(\nT^3)}.
\end{align*}
Subtracting the equations \eqref{Gal_EV_mo} for $u_N$ and $u_M$, applying $I+\alpha^2 A$ to both sides, taking the inner product with $w_M^N$ and using \eqref{B:Alt} and \eqref{B:zero}, we find
\begin{align*}
  &\phantom{{}=}
   \frac{1}{2}\frac{d}{dt}\pnt{\|w_M^N\|_{L^2(\nT^3)}^2
   +\alpha^2\|\nabla w_M^N\|_{L^2(\nT^3)}^2}
   \\&=
   \pnt{B(u_N,u_N),(I-P_N)u_M}-\pnt{B(w_M^N,u_N),w_M^N}
   \\&\leq
   C\|u_N\|_{H^1(\nT^3)}^{3/2}\|u_N\|_{H^2(\nT^3)}^{1/2}\|(I-P_N)u_M\|_{L^2(\nT^3)}
   +C\|w_M^N\|_{H^1(\nT^3)}^2\|u_N\|_{H^1(\nT^3)}
   \\&\leq
   C\|u_N\|_{H^1(\nT^3)}^{3/2}N^{1/2}\|u_N\|_{H^1(\nT^3)}^{1/2}N^{-1}\|u_M\|_{H^1(\nT^3)}
   +C\|w_M^N\|_{H^1(\nT^3)}^2\|u_N\|_{H^1(\nT^3)}
   \\&\leq
   C_\alpha N^{-1/2}+C_\alpha\pnt{\|w_M^N\|_{L^2(\nT^3)}^2
   +\alpha^2\|\nabla w_M^N\|_{L^2(\nT^3)}^2},
\end{align*}
since $\|u_N\|_{H^1(\nT^3)}\leq C_\alpha$ uniformly in $N$.  Gr\"onwall's inequality yields
\begin{align*}
&\phantom{{}=}
\|w_M^N(t)\|_{L^2(\nT^3)}^2
   +\alpha^2\|\nabla w_M^N(t)\|_{L^2(\nT^3)}^2
   \\&\leq
   \pnt{\|w_M^N(0)\|_{L^2(\nT^3)}^2
   +\alpha^2\|\nabla (w_M^N(0))\|_{L^2(\nT^3)}^2}e^{C_\alpha t}+C_\alpha N^{-1/2}(e^{C_\alpha t}-1).
   \end{align*}
Since $u^{in}\in V$,  it follows that $\|w_M^N(0)\|_{L^2(\nT^3)}^2
   +\alpha^2\|\nabla (w_M^N(0))\|_{L^2(\nT^3)}^2\maps 0$ as $M,N\maps \infty$.   Therefore, the above inequality shows that $\set{u_N}_{N=1}^\infty$ is a Cauchy sequence and hence converges to an element $u\in C([0,T],V)$.  Choose $v\in \mathcal{V}$ arbitrarily.  Since $u_N\in C([0,T],\mathcal{V})$, taking the inner product of \eqref{Gal_EV_mo} with $v$ and integrating in time is justified, so we have
\begin{align}
   &\phantom{{}=}\nonumber
   (u_N(t),v)+\alpha^2((u_N(t),v)) - (u_N(0),v)-\alpha^2((u_N(0),v))
   \\&=\label{Gal_EV_Int}
   -\int_0^t(B(u_N(s),u_N(s)),P_Nv)\,ds.
\end{align}
Since $u_N\maps u$ strongly in $C([0,T],V)$, we have $(u_N(t'),v)\maps (u(t'),v)$, and\\ $((u_N(t'),v))\maps ((u(t'),v))$ for all $t'\in[0,T]$.  Furthermore, we note that
\begin{align*}
   &\phantom{{}=}\int_0^t(B(u_N(s),u_N(s)),P_Nv)\,ds
   -\int_0^t\ip{B(u(s),u(s))}{v}\,ds
   \\&=
   \int_0^t\pair{B(u_N(s),u_N(s))}{P_Nv-v}\,ds
   +\int_0^t\ip{B(u_N(s)-u(s),u_N(s))}{v}\,ds
      \\&\phantom{{}=}
   +\int_0^t\ip{B(u(s),u_N(s)-u(s))}{v}\,ds.
\end{align*}
Using this and the facts that $\|u_N\|_{H^1(\nT^3)}$ is bounded independantly of $N$, that  $u_N\maps u$ strongly in $C([0,T],V)$, and that $\|P_Nv-v\|_{H^1(\nT^3)}\maps0$ as $N\maps\infty$, we find that $\int_0^t(B(u_N(s),u_N(s)),P_Nv)\,ds \maps\int_0^t\ip{B(u(s),u(s)}{v}\,ds$ as $N\maps\infty$.  Passing to the limit in \eqref{Gal_EV_Int}, we have
\begin{align*}
   (u(t),v)+\alpha^2((u(t),v)) - (u^{in},v)-\alpha^2((u^{in},v))
   =
   -\int_0^t\ip{B(u(s),u(s))}{v}\,ds
\end{align*}
 for all $v\in\mathcal{V}$.  A simple argument using the density of $\mathcal{V}$ in $V$ shows that  the equality holds for all $v\in V$.  Since $u\in C([0,T],V)$ we have $Au,\, B(u,u)\in C([0,T],V')$.  Therefore the above equality shows that \eqref{fun_Euler_Voigt} holds in the sense of $V'$ and that $u(0)=u^{in}$.    Thus, we have proven the existence of a solution to \eqref{fun_Euler_Voigt}.   Thanks to \eqref{fun_Euler_Voigt} and the fact that $(I+\alpha^2A)^{-1}B(u,u)\in C([0,T],V)$, we have $\frac{d}{dt} u\in C([0,T],V)$ so that $u\in C^1([0,T],V)$.  To show the uniqueness and continuous dependence on initial data, let $u_1, u_2\in C([0,T],V)$ are two solutions of \eqref{fun_Euler_Voigt} initial data  $u_1^{in},u_2^{in}\in V$ respectively, and we write $\delta u:=u_2-u_1$.  Subtracting, we have
 \begin{align}\label{w_eqn}
    \frac{d}{dt}\delta u = (I+\alpha^2A)^{-1}(-B(u_2,\delta u)-B(\delta u,u_1)).
 \end{align}
 Since $\frac{d}{dt}\delta u\in C([0,T],V)$, we may justifiably take the inner product of \eqref{w_eqn} with $\delta u$ and move the time derivative outside the inner product.  Using \eqref{B:Alt} and the fact that $(I+\alpha^2A)^{-1}$ is self-adjoint, we have
  \begin{align*}
    \frac{1}{2}\frac{d}{dt}\|\delta u\|_{L^2(\nT^3)}^2
    &=
    \pair{B(u_2,(I+\alpha^2A)^{-1}\delta u)}{\delta u}
    -\pair{B(\delta u,u_1)}{(I+\alpha^2A)^{-1}\delta u}
    \\&\leq
     \|u_2\|_{L^6(\nT^3)}\|\nabla(I+\alpha^2A)^{-1}\delta u\|_{L^3(\nT^3)}\|\delta u\|_{L^2(\nT^3)}
     \\&\phantom{{}=}+
     \|\delta u\|_{L^2(\nT^3)}\|\nabla u_1\|_{L^2(\nT^3)}\|(I+\alpha^2A)^{-1}\delta u\|_{L^\infty(\nT^3)}
     \\&\leq
     C_\alpha\pnt{\|u_2\|_{H^1(\nT^3)}+\|u_1\|_{H^1(\nT^3)}}\|\delta u\|_{L^2(\nT^3)}^2
     \leq C_\alpha\|\delta u\|_{L^2(\nT^3)}^2,
 \end{align*}
 since $\|u_1\|_{H^1(\nT^3)},\|u_2\|_{H^1(\nT^3)}\leq C_\alpha$.  By Gr\"onwall's inequality, we see that\\ $\|u_2(t)-u_1(t)\|_{L^2(\nT^3)}^2 \leq \|u_2^{in}-u_1^{in}\|_{L^2(\nT^3)}^2e^{C_\alpha t}$.  In particular, if $u_1^{in}=u_2^{in}$, then $u_2(t)=u_1(t)$ for all $t\in [0,T]$.

Finally, we show the higher-order regularity.  Suppose that $u^{in}\in H^m(\nT^3)\cap V$.  The process leading to \eqref{H2growth}, \eqref{H3growth}, and \eqref{Hmgrowth} is justified for  $u_N$, since $u_N\in C([0,T],\mathcal{V})$.  Thus, $u_N$ is bounded uniformly with respect to $N$ in $C([0,T],H^m(\nT^3)\cap V)$.  By the Banach-Alaoglu theorem, we see that $u_N$ has a subsequence converging in the weak-$*$ topology to an element $v\in L^\infty([0,T],H^m(\nT^3)\cap V)$, and furthermore that the estimates \eqref{H2growth}, \eqref{H3growth}, and \eqref{Hmgrowth} hold for a.e. $t\in[0,T]$.  On the other hand, we already know that $u_N\maps u$ in $C([0,T],V)$.  Therefore $u=v\in L^\infty([0,T],H^m(\nT^3)\cap V)$.  From \eqref{fun_Euler_Voigt}, it is easy to see that $\frac{d}{dt}v\in L^\infty([0,T],H^m(\nT^3)\cap V)$, and thus by the Sobolev embedding theorem, we have $u\in C([0,T],H^m(\nT^3)\cap V)$.  Again by \eqref{fun_Euler_Voigt}, we see that $u\in C^1([0,T],H^m(\nT^3)\cap V)$.
\end{proof}

\section{A Gevrey Class Regularity (Spatial Analyticity)}\label{sec:Gev}
Next, we show that \eqref{EV} (or equivalently \eqref{fun_Euler_Voigt}) has additional regularity, under the assumption of the relevant regularity of  $u^{in}$.  Namely we show that it has a specific type of Gevrey regularity which is analytic in space.  In accordance with the discussion in Section \ref{sec:HReg}, we do not expect solutions for initial data which is not analytic to become analytic instantaneously, unlike in the case of parabolic equations.  (However, see  \cite{Kalantarov_Levant_Titi_2009}, which proves the analytic regularity of the attractor of the Navier-Stokes-Voigt equations, i.e., in the presence of viscosity and an analytic forcing term.  See also \cite{Paicu_Vicol_2009} for similar results concerning second-grade fluids.)  The concept of Gevrey regularity was first used in the context of the Navier-Stokes equations in \cite{Foias_Temam1989} and was expanded to more general non-linear parabolic equations in \cite{Ferrari_Titi1998} (see also \cite{Cao_Rammaha_Titi_1999, Cao_Rammaha_Titi_2000}).  For the three-dimensional Navier-Stokes-Voigt equations, Gevrey regularity for the attractor was shown in \cite{Kalantarov_Levant_Titi_2009}.  Gevrey regularity for the Euler equations in bounded domians  has been studied in \cite{Bardos_Benachour1977} and \cite{Kukavica_Vicol_2009}.

We define the Gevrey class of order $s>0$ for a given $r\geq0$ as the domain of a particular class of operators parameterized by $\tau>0$:
\begin{equation*}
 \mathcal{D}(A^r e^{\tau A^{1/(2s)}}):=\set{u\in H^r(\nT^3)\cap V:\|e^{\tau A^{1/(2s)}}u\|_{H^r(\nT^3)}<\infty}.
\end{equation*}
For our purposes, we will work in the case $s=1$, where $\mathcal{D}(e^{\tau A^{1/2}})$ corresponds to the set of real analytic functions $C^\omega(\nT^3)$.  For more on Gevrey classes, see \cite{Ferrari_Titi1998, Foias_Manley_Rosa_Temam2001, Foias_Temam1989, Levermore_Oliver1997, Rodino-1993, Kukavica_Vicol_2009}.  We will need the following lemma, contained in \cite{Levermore_Oliver1997} (see also \cite{Oliver_Titi2000}).
\begin{lemma}\label{lemma:LivermoreOliver}
 For $\psi,\vphi$ defined on  $\nT^n$ such that   $\psi\in\mathcal{D}(A^{r/2+1/4}e^{\tau A^{1/2}})$ and $\vphi\in\mathcal{D}(A^{r/2+1/2}e^{\tau A^{1/2}})$, where $r>\frac{n+3}{2}$ and $\nabla\cdot \psi=0$, we have
\begin{equation}\label{NonLinEst}
\nonumber\begin{split}
&\phantom{={}}\abs{\pair{A^{r/2}e^{\tau A^{1/2}}B(\psi, \vphi)}{A^{r/2}e^{\tau A^{1/2}} \vphi}}
\\&\leq
c_1\|A^{r/2} \psi\|_{L^2}\|A^{r/2} \vphi\|^2_{L^2}
+
\tau c_2\left(
    \|A^{r/2} e^{\tau A^{1/2}}\psi\|_{L^2}\|A^{r/2+1/4}e^{\tau A^{1/2}} \vphi\|^2_{L^2}\right.
\\&\phantom{={}}+
    \left.\|A^{r/2+1/4}e^{\tau A^{1/2}} \psi\|_{L^2}\|A^{r/2} e^{\tau A^{1/2}} \vphi\|_{L^2}\|A^{r/2+1/4}e^{\tau A^{1/2}} \vphi\|_{L^2}
    \right)
\end{split}
\end{equation}
where $c_1$, $c_2>0$ depend only on $r$.
\end{lemma}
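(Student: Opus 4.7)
My plan is to establish the estimate via a direct Fourier-series argument, combining the key elementary inequality $|k|^{1/2} \leq |j|^{1/2} + |l|^{1/2}$ for $k = j+l$ (which gives the submultiplicative bound $e^{\tau|k|^{1/2}} \leq e^{\tau|j|^{1/2}} e^{\tau|l|^{1/2}}$) with the bound $e^x - 1 \leq xe^x$ for $x \geq 0$. Writing $\psi$ and $\vphi$ in Fourier series and using $\nabla\cdot\psi = 0$ to drop the Leray projection (since the test function $A^{r/2}e^{\tau A^{1/2}}\vphi$ is itself divergence-free), Plancherel and the triangle inequality reduce the problem to bounding
\[
\sum_k |k|^r e^{2\tau|k|^{1/2}} \sum_{j+l=k} |\hat\psi_j|\,|l|\,|\hat\vphi_l|\,|\hat\vphi_k|.
\]

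The crucial step is the decomposition $e^{2\tau|k|^{1/2}} \leq 1 + 2\tau|k|^{1/2} e^{2\tau|k|^{1/2}}$, which splits the above sum into a purely Sobolev piece and a Gevrey correction of order $\tau$. For the Sobolev piece, I would use the cancellation coming from $\nabla\cdot\psi=0$ together with a Kato--Ponce-type commutator estimate, bounded by H\"older and the Sobolev embedding $H^r \hookrightarrow L^\infty$ (valid since $r > (n+3)/2$), to produce the term $c_1\|A^{r/2}\psi\|_{L^2}\|A^{r/2}\vphi\|_{L^2}^2$. For the Gevrey piece, I would factor $e^{2\tau|k|^{1/2}} \leq e^{\tau|k|^{1/2}}\,e^{\tau|j|^{1/2}}\,e^{\tau|l|^{1/2}}$ so as to load each of the three Fourier factors with its own weight, and split the remaining $|k|^{1/2}$ via $|k|^{1/2} \leq |j|^{1/2}+|l|^{1/2}$. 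The two resulting subsums correspond exactly to the two terms in the $\tau c_2(\cdots+\cdots)$ bound: in one, the extra $|j|^{1/2}$ lands on $\psi$ producing the norm $\|A^{r/2+1/4}e^{\tau A^{1/2}}\psi\|_{L^2}$; in the other, the extra $|l|^{1/2}$ lands on $\vphi$ producing $\|A^{r/2+1/4}e^{\tau A^{1/2}}\vphi\|_{L^2}$. A final application of H\"older and Sobolev embedding in each subsum closes the estimate.

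The main obstacle is the careful bookkeeping of exponents: one must arrange the splittings so that precisely the fractional $A^{1/4}$ gain (rather than a full derivative) appears on exactly one factor in each Gevrey term, and the threshold $r > (n+3)/2$ must be used sharply enough to close every Sobolev embedding invoked along the way. The scheme outlined is essentially the one carried out in \cite{Levermore_Oliver1997} (see also \cite{Oliver_Titi2000}), and the estimate can be taken from there directly.
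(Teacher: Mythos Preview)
The paper does not prove this lemma; it merely cites \cite{Levermore_Oliver1997} (see also \cite{Oliver_Titi2000}) and uses the estimate as a black box. Your sketch is precisely the argument carried out in those references, and you identify them yourself at the end, so your approach coincides with the paper's.

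One small slip worth correcting: since $A=-\Delta$ in the periodic setting, the eigenvalue of $A^{1/2}$ on the Fourier mode $k$ is (proportional to) $|k|$, not $|k|^{1/2}$; thus the exponential weight is $e^{\tau|k|}$, the relevant triangle inequality is simply $|k|\le|j|+|l|$, and the extra half-power that produces the $A^{1/4}$ factors comes from splitting $|k|^{1/2}\le|j|^{1/2}+|l|^{1/2}$ applied to the leftover $|k|^{1/2}$ in $\tau|k|^{1/2}e^{\tau|k|}\cdot|k|^r$ after pulling out one $|k|^{1/2}$ from the Gevrey remainder. This is a notational confusion only and does not affect the structure of your argument.
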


We are now ready to state and prove the main theorem of this section.  The proof given in the present work is established rigorously using the Galerkin method.   To establish our estimates, we follow some of the ideas of  \cite{Levermore_Oliver1997}, which were applied formally to the 2D lake equations to establish formal \textit{a priori} estimates.  However, the results of \cite{Levermore_Oliver1997} can be justified rigorously by using methods similar to those employed in the proof below.

\begin{theorem}\label{thm:gev}
   Let $u^{in}\in \mathcal{D}(A^{(r+1)/2} e^{\sigma A^{1/2}})$ for some $r>3$, $r\in\nN$, and $\sigma\geq0$.  Then there exists a unique solution $u$ of \eqref{fun_Euler_Voigt} (with $f=0$, for simplicity) satisfying
\begin{align}
&\phantom{{}=}\label{Gev_Bnd_Thm}
\alpha^2\|A^{r/2}e^{\tau(t) A^{1/2}}\nabla u(t)\|^2_{L^2({\nT^3})}
+
\|A^{r/2}e^{\tau(t) A^{1/2}} u(t)\|^2_{L^2({\nT^3})}
\\&\leq\nonumber
\alpha^2\|A^{r/2}e^{\sigma A^{1/2}}\nabla u^{in}\|^2_{L^2({\nT^3})}
+
\|A^{r/2}e^{\sigma A^{1/2}} u^{in}\|^2_{L^2({\nT^3})}
+
2c_1\int_0^{|t|}\kappa^3(\xi)\,d\xi.
\end{align}
   for all $t\in(-\infty,\infty)$, where $\tau(t):=\sigma\exp\pnt{-c_2\int_0^{|t|} h(\xi)\,d\xi}$, $h>0$ is defined by \eqref{h_def} below, $c_1, c_2>0$ are given by Lemma \ref{lemma:LivermoreOliver} above, and $\kappa(\xi):=C_\alpha(1+|\xi|)^{p(r)}$, where $p(r) = 5\pnt{\frac{3}{2}}^{r-3}-1$.
\end{theorem}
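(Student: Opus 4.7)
The plan is to carry out an energy estimate in the Gevrey norm on the Galerkin approximations $u_N$ from Proof~2 of Theorem~\ref{thm:regularity}, with the analyticity radius $\tau(t)$ chosen dynamically so that a negative $\tau'(t)$ contribution absorbs the dangerous higher-order Gevrey terms generated by the nonlinearity. Define
\[
E_N(t):=\alpha^2\|A^{r/2}e^{\tau(t) A^{1/2}}\nabla u_N(t)\|_{L^2}^2+\|A^{r/2}e^{\tau(t) A^{1/2}}u_N(t)\|_{L^2}^2,
\]
for a smooth, non-increasing function $\tau:[0,\infty)\to[0,\sigma]$ with $\tau(0)=\sigma$, to be selected below.

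First I would differentiate $E_N(t)$. The time-dependence of the Gevrey weight $e^{\tau(t)A^{1/2}}$ produces an extra $\tau'(t)$ contribution, and substituting the Galerkin form of \eqref{fun_Euler_Voigt} yields
\[
\tfrac{1}{2}\tfrac{d}{dt}E_N = \tau'(t)\big[\|A^{r/2+1/4}e^{\tau A^{1/2}}u_N\|_{L^2}^2+\alpha^2\|A^{r/2+1/4}e^{\tau A^{1/2}}\nabla u_N\|_{L^2}^2\big] - \pair{A^{r/2}e^{\tau A^{1/2}}B(u_N,u_N)}{A^{r/2}e^{\tau A^{1/2}}u_N}.
\]
The nonlinear term is then bounded by Lemma~\ref{lemma:LivermoreOliver} with $\psi=\vphi=u_N$ (which is divergence free). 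This yields a ``Sobolev'' piece $c_1\|A^{r/2}u_N\|_{L^2}^3\le c_1\kappa(t)^3$, in which I use the uniform-in-$N$ bound $\|u_N(t)\|_{H^r}\le\kappa(t)$ already established in Theorem~\ref{thm:regularity} for the Galerkin system, plus two identical ``bad'' pieces of the form $\tau(t)\,c_2\,\|A^{r/2}e^{\tau A^{1/2}}u_N\|_{L^2}\,\|A^{r/2+1/4}e^{\tau A^{1/2}}u_N\|_{L^2}^2$.

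Next I would choose $\tau$ so that the bad pieces are absorbed by the (non-positive) $\tau'$-term. Take $h(\xi)>0$ to be an explicit function dominating $2\|A^{r/2}e^{\tau(\xi) A^{1/2}}u_N(\xi)\|_{L^2}$. The natural choice, motivated by the target estimate \eqref{Gev_Bnd_Thm}, is a sufficiently large constant multiple of the square root of the right-hand side of \eqref{Gev_Bnd_Thm} (with $|t|$ replaced by $\xi$); set
\[
\tau(t):=\sigma\exp\Big(-c_2\int_0^{|t|}h(\xi)\,d\xi\Big),
\]
so that $\tau'(t)=-c_2 h(t)\,\tau(t)$. Whenever $\sqrt{E_N(t)}\le h(t)/2$, the bad terms are dominated by $-\tau'(t)\|A^{r/2+1/4}e^{\tau A^{1/2}}u_N\|_{L^2}^2$, and the energy identity collapses to
\[
\tfrac{d}{dt}E_N(t)\le 2c_1\kappa(t)^3,
\]
which, upon integration, gives $E_N(t)\le E_N(0)+2c_1\int_0^{|t|}\kappa(\xi)^3\,d\xi$, the desired form of \eqref{Gev_Bnd_Thm}.

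The principal obstacle is the apparent circularity: $\tau$ is defined through $h$, which in turn is supposed to dominate a Gevrey norm computed using that very $\tau$. I would resolve this by a standard continuity/bootstrap argument: the set $\{t\ge 0:\sqrt{E_N(s)}\le h(s)/2\text{ for all }s\in[0,t]\}$ contains $0$ (by the choice of the constant built into $h$), is closed by continuity of $E_N$ and $h$, and is open by the differential inequality just derived; hence it equals $[0,\infty)$, giving \eqref{Gev_Bnd_Thm} for $u_N$ uniformly in $N$. Passing to the limit $N\to\infty$ then uses weak-$*$ lower semicontinuity of the Gevrey norm, together with the strong convergence $u_N\to u$ in $C([-T,T],V)$ from Proof~2 of Theorem~\ref{thm:regularity}, which also supplies uniqueness of the limit. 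The case $t<0$ is handled by the same argument in view of the $|t|$ appearing in the definition of $\tau$ and the time-reversibility of the Euler-Voigt system. A minor technical point is that all operator-calculus manipulations with $A^{r/2}e^{\tau A^{1/2}}$ applied to the Galerkin solutions are justified by the spectral representation on the finite-dimensional space $H_N$.
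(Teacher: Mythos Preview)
Your proposal is correct and follows essentially the same route as the paper: Galerkin approximation, a Gevrey energy estimate with time-dependent radius $\tau$, Lemma~\ref{lemma:LivermoreOliver} applied with $\psi=\vphi=u_N$, the choice $\tau'=-c_2 h\tau$ together with a continuity (bootstrap) argument to close the inequality, and then passage to the limit. The only notable difference is in the limit passage: the paper extracts strong convergence in a slightly weaker Gevrey space via Arzel\`a--Ascoli and a compact embedding $B_r\hookrightarrow B_{r-\epsilon}$, then recovers the $\epsilon=0$ bound by monotone convergence on Fourier coefficients, whereas you invoke weak-$*$ lower semicontinuity directly; both are standard and either suffices.
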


The above theorem shows that for all $t\in(-\infty,\infty)$, $u(\cdot,t)$ is bounded in the space $\mathcal{D}(A^{(r+1)/2}e^{\tau(t)A^{1/2}})$.  Therefore, $u$ is analytic with respect to the spatial variable.  Furthermore, the radius of analyticity of the solution, which is bounded from below by $\tau(t)$, may be shrinking as time increases, but it never collapses to zero in finite time,  since $\tau(t)>0$ for all $t$.

\begin{proof}  First, we observe that the case $\sigma=0$ is already covered by Theorem \ref{thm:regularity}; therefore, we assume that $\sigma>0$.  Choose $T>0$ arbitrarily.  Here again, we focus on the interval $[0,T]$.  The proof for $[-T,0]$ is the same.  Clearly $\mathcal{D}(A^{(r+1)/2} e^{\sigma A^{1/2}})\subset H^{r+1}(\nT^3)$, so the conclusions of Theorem \ref{thm:regularity} hold.  It remains to prove \eqref{Gev_Bnd_Thm}, which we establish using  the Galerkin approximation method.  Consider the system \eqref{Gal_EV}.  As discussed in Proof 2 of Theorem \ref{thm:regularity}, this system has a solution $u_N$ in $C^1((-\infty,\infty),H_N)$ where $H_N:=\text{span}\set{\vphi_k}_{k=1}^N$ and $\vphi_k$ are defined in Section \ref{sec:Pre}.  We denote $\frac{d\tau}{dt}$ by $\dot{\tau}$.  Applying $A^{r/2}e^{\tau A^{1/2}}$ to both sides of \eqref{Gal_EV_mo} and using the identity
\begin{equation*}
 e^{\tau A^{1/2}}\frac{d}{dt}u_N
=
\frac{d}{dt}(e^{\tau A^{1/2}}u_N)-\dot{\tau}A^{1/2}e^{\tau A^{1/2}}u_N,
\end{equation*}
we have
\begin{equation*}
\begin{split}
&\phantom{={}} \alpha^2\pnt{\frac{d}{dt}(A^{r/2}e^{\tau A^{1/2}}A u_N)
-\dot{\tau}A^{(r+1)/2}e^{\tau A^{1/2}}A u_N }
\\&\phantom{={}}+
\frac{d}{dt}(A^{r/2}e^{\tau A^{1/2}}u_N)
-\dot{\tau}A^{(r+1)/2}e^{\tau A^{1/2}}u_N
=
-A^{r/2}e^{\tau A^{1/2}}P_NB(u_N,u_N).
\end{split}
\end{equation*}
Taking the $L^2$ inner product of this equation with $A^{r/2}e^{\tau A^{1/2}}u_N$ and using the facts that $A$ is a positive self-adjoint operator and $\nabla\cdot u_N=0$ gives
\begin{equation}\label{Ge1}
\begin{split}
&\quad\frac{1}{2}\frac{d}{dt}
\pnt{\alpha^2\|A^{r/2}e^{\tau A^{1/2}}\nabla u_N\|^2_{L^2({\nT^3})}
+\|A^{r/2}e^{\tau A^{1/2}} u_N\|^2_{L^2({\nT^3})}}
\\&=
\alpha^2\dot{\tau}\|A^{r/2+1/4}e^{\tau A^{1/2}}\nabla u_N\|^2_{L^2({\nT^3})}
+
\dot{\tau}\|A^{r/2+1/4}e^{\tau A^{1/2}} u_N\|^2_{L^2({\nT^3})}
\\
&\quad
-\pnt{A^{r/2}e^{\tau A^{1/2}}B(u_N,u_N),A^{r/2}e^{\tau A^{1/2}}u_N}.
\end{split}
\end{equation}
We now estimate the last term in \eqref{Ge1}.   We use techniques similar to \cite{Levermore_Oliver1997, Oliver_Titi2000}.  Let us set $\vphi=\psi=u_N$ in Lemma \ref{lemma:LivermoreOliver} so that we have
\begin{align*}
&\phantom{={}}\abs{\pair{A^{r/2}e^{\tau A^{1/2}}B(u_N, u_N)}{A^{r/2}e^{\tau A^{1/2}} u_N}}
\\&\leq
c_1\|A^{r/2} u_N\|^3_{L^2}
+\tau c_2\|A^{r/2} e^{\tau A^{1/2}}u_N\|_{L^2}\|A^{r/2+1/4}
e^{\tau A^{1/2}}u_N\|^2_{L^2}.
\end{align*}
Using this in \eqref{Ge1} gives
\begin{equation}\label{Ge2}\begin{split}
&\quad\frac{1}{2}\frac{d}{dt}
\pnt{\alpha^2\|A^{r/2}e^{\tau A^{1/2}}\nabla u_N\|^2_{L^2({\nT^3})}
+
\|A^{r/2}e^{\tau A^{1/2}} u_N\|^2_{L^2({\nT^3})}}
\\&\leq
c_1\|A^{r/2} u_N\|^3_{L^2({\nT^3})}
+
\dot{\tau}\alpha^2\|A^{r/2+1/4}e^{\tau A^{1/2}}\nabla u_N\|^2_{L^2({\nT^3})}
\\&\quad
+
\pnt{
\dot{\tau}
+
\tau c_2\|A^{r/2} e^{\tau A^{1/2}}u_N\|_{L^2({\nT^3})}
}
\|A^{r/2+1/4} e^{\tau A^{1/2}}u_N\|^2_{L^2({\nT^3})}.
\end{split}\end{equation}
Note that from the proof of Theorem \ref{thm:regularity}, we have $\|A^{r/2} u_N(t)\|_{L^2({\nT^3})}\leq C_\alpha(1+|t|)^{p(r)}:=\kappa(t)$, where $p$ is defined in the statement of the theorem.  Recall that we defined $\tau(t) := \sigma\exp\pnt{-c_2\int_0^t h(\xi)\,d\xi}$.  Note that $\tau(0)=\sigma$.   The function $h(t)>0$ is defined by
\begin{align}\label{h_def}
\begin{split}
(h(t))^2&:=1+\alpha^2\|A^{r/2}e^{\sigma A^{1/2}}\nabla u^{in}\|^2_{L^2({\nT^3})}
\\&\phantom{:=1+}+
\|A^{r/2}e^{\sigma A^{1/2}} u^{in}\|^2_{L^2({\nT^3})}
+
2c_1\int_0^{|t|}\kappa^3(\theta)\,d\theta.
\end{split}
\end{align}
This choice of $\tau$ follows ideas from \cite{Oliver_Titi2000,Levermore_Oliver1997}.  Now, since $\tau,h> 0$ on $[0,\infty)$, we have $\dot{\tau}(t)=-c_2h(t)\tau(t)< 0$.  Furthermore, we claim that
\begin{equation}\label{Atauh}
 \|A^{r/2}e^{\tau(t) A^{1/2}} u_N\|_{L^2({\nT^3})}< h(t)
\end{equation}
 for all time $t\geq0$.  Clearly \eqref{Atauh} holds at $t=0$, and therefore by continuity it holds for a short time.  Let
$t^*:=\sup\set{\theta>0\text{ such that \eqref{Atauh} holds on } [0,\theta)}$.  If $t^*=\infty$, we are done.  Thus, suppose that $t^*<\infty$.  Using the above bounds, integrating \eqref{Ge2} on $[0,t^*]$, and recalling that $\tau(0)=\sigma$, we have
\begin{equation*}\begin{split}
&\;\quad
\alpha^2\|A^{r/2}e^{\tau(t^*) A^{1/2}}\nabla u_N(t^*)\|^2_{L^2({\nT^3})}
+
\|A^{r/2}e^{\tau(t^*) A^{1/2}} u_N(t^*)\|^2_{L^2({\nT^3})}
\\&\leq
\alpha^2\|A^{r/2}e^{\sigma A^{1/2}}\nabla P_Nu^{in}\|^2_{L^2({\nT^3})}
+
\|A^{r/2}e^{\sigma A^{1/2}} P_Nu^{in}\|^2_{L^2({\nT^3})}
+
2c_1\int_0^{t^*}\kappa^3(\theta)\,d\theta
\\&< h^2(t^*).
\end{split}\end{equation*}
Therefore the (strict) inequality \eqref{Atauh} holds on $[0,t^*]$, and can thus be extended in time beyond $t^*$, contradicting the definition of $t^*$.  Hence, the assumption that $t^*<\infty$ must be false.  Thus for all time,
$\dot{\tau}
+
\tau c_2\|A^{r/2} e^{\tau A^{1/2}}u_N\|_{L^2({\nT^3})}
\leq
\dot{\tau}
+
\tau c_2 h
=0
$.
Combining the above observations with \eqref{Ge2} yields
\begin{equation}\label{GevBnd1}
\frac{1}{2}\frac{d}{dt}
\pnt{\alpha^2\|A^{r/2}e^{\tau A^{1/2}}\nabla u_N\|^2_{L^2({\nT^3})}
+
\|A^{r/2}e^{\tau A^{1/2}} u_N\|^2_{L^2({\nT^3})}}
\leq
c_1\kappa^3(t).
\end{equation}
Integrating \eqref{GevBnd1} and using the fact that $\tau(0)=\sigma$, we have
\begin{align}
&\phantom{{}=}\label{Gev_Bnd_Int}
\alpha^2\|A^{r/2}e^{\tau(t) A^{1/2}}\nabla u_N(t)\|^2_{L^2({\nT^3})}
+
\|A^{r/2}e^{\tau(t) A^{1/2}} u_N(t)\|^2_{L^2({\nT^3})}
\\&\leq\nonumber
\alpha^2\|A^{r/2}e^{\sigma A^{1/2}}\nabla P_Nu^{in}\|^2_{L^2({\nT^3})}
+
\|A^{r/2}e^{\sigma A^{1/2}} P_Nu^{in}\|^2_{L^2({\nT^3})}
+
2c_1\int_0^t\kappa^3(\xi)\,d\xi.
\end{align}
   Set $R:=\alpha^2\|A^{r/2}e^{\sigma A^{1/2}}\nabla u^{in}\|^2_{L^2({\nT^3})}+  \|A^{r/2}e^{\sigma A^{1/2}}u^{in}\|_{L^2(\nT^3)}$ and fix $\widetilde{T}\in(0,T]$ arbitrarily.  Define  $\tau_{\min}:=\min_{0\leq t\leq \widetilde{T}}\tau(t)=\tau(\widetilde{T})$ (since $\tau$ is decreasing).  Notice that $\tau_{\min}$ depends on $r$, $\sigma$, $\widetilde{T}$, and $R$ alone.  Consider the Banach  space $B_{r,\sigma,\widetilde{T},R}^\alpha$ (which is in fact a Hilbert space) given by
   \begin{align*}
      B_{r,\sigma,\widetilde{T},R}^\alpha := \set{\vphi\in H : \alpha^2\|A^{r/2}e^{\tau_{\min} A^{1/2}}\nabla\vphi\|_{L^2(\nT^3)}^2+\|A^{r/2}e^{\tau_{\min} A^{1/2}}\vphi\|_{L^2(\nT^3)}^2<\infty},
   \end{align*}
   endowed with the indicated norm.  Due to \eqref{Gev_Bnd_Int} and the fact that $\tau(0)=\sigma$, we have
\begin{align}
&\phantom{{}=}\nonumber
\alpha^2\|A^{r/2}e^{\tau_{\min} A^{1/2}}\nabla u_N(t)\|^2_{L^2({\nT^3})}
+
\|A^{r/2}e^{\tau_{\min} A^{1/2}} u_N(t)\|^2_{L^2({\nT^3})}
\\&\leq\nonumber
\sup_{0\leq t\leq \widetilde{T}}\pnt{
\alpha^2\|A^{r/2}e^{\tau(t) A^{1/2}}\nabla u_N(t)\|^2_{L^2({\nT^3})}
+
\|A^{r/2}e^{\tau(t) A^{1/2}} u_N(t)\|^2_{L^2({\nT^3})}
}
\\&\leq\nonumber
\alpha^2\|A^{r/2}e^{\sigma A^{1/2}}\nabla P_N u^{in}\|^2_{L^2({\nT^3})}
+
\|A^{r/2}e^{\sigma A^{1/2}} P_N u^{in}\|^2_{L^2({\nT^3})}
+
2c_1\int_0^{\widetilde{T}}\kappa^3(\xi)\,d\xi
\\&\leq\nonumber
\alpha^2\|A^{r/2}e^{\sigma A^{1/2}}\nabla u^{in}\|^2_{L^2({\nT^3})}
+
\|A^{r/2}e^{\sigma A^{1/2}} u^{in}\|^2_{L^2({\nT^3})}
+
2c_1\int_0^{\widetilde{T}}\kappa^3(\xi)\,d\xi.
\end{align}
 Thus, $u_N$ is uniformly bounded with respect to $N$ in $C([0,\widetilde{T}],B_{r,\sigma,\widetilde{T},R}^\alpha)$.  We now find a uniform bound for the time derivative.  To do this, we use the fact that $\|uv\|_{B_{s,\sigma,\widetilde{T},R}^\alpha}\leq C_\alpha\|u\|_{B_{s,\sigma,\widetilde{T},R}^\alpha}\|v\|_{B_{s,\sigma,\widetilde{T},R}^\alpha}$ for $s>1/2$, which was  shown in Lemma 1 of \cite{Ferrari_Titi1998}.   Applying $(I+\alpha^2 A)^{-1}$ to \eqref{Gal_EV_mo}, we calculate
{\allowdisplaybreaks
\begin{align*}
   &\phantom{{}=}
   \norm{\frac{d}{dt}u_N}_{B_{r+1,\sigma,\widetilde{T},R}^\alpha}
   =\norm{(I+\alpha^2 A)^{-1}P_NB(u_N,u_N)}_{B_{r+1,\sigma,\widetilde{T},R}^\alpha}
   \\&\leq
   C_\alpha\norm{B(u_N,u_N)}_{B_{r-1,\sigma,\widetilde{T},R}^\alpha}
   \leq
   C_\alpha\norm{u_N}_{B_{r-1,\sigma,\widetilde{T},R}^\alpha}
   \norm{\nabla u_N}_{B_{r-1,\sigma,\widetilde{T},R}^\alpha}
   \\&\leq
   C_\alpha\norm{u_N}_{B_{r-1,\sigma,\widetilde{T},R}^\alpha}
   \norm{u_N}_{B_{r,\sigma,\widetilde{T},R}^\alpha}
   \leq
   C_\alpha \norm{u_N}_{B_{r,\sigma,\widetilde{T},R}^\alpha}^2.
\end{align*}
}
 Thus, due to the uniform boundedness of the sequence $\norm{u_N}_{B_{r,\sigma,\widetilde{T},R}^\alpha}$, we have that $\frac{d}{dt}u_N$ is uniformly bounded in $C([0,\widetilde{T}],B_{r+1,\sigma,\widetilde{T},R}^\alpha)$ with respect to $N$, so that $\set{u_N}_{N=1}^\infty$ is equicontinuous with values in $B_{r,\sigma,\widetilde{T},R}^\alpha$.  Let $0<\epsilon\ll 1$.  Thanks to the compact embedding $B_{r,\sigma,\widetilde{T},R}^\alpha\hookrightarrow B_{r-\epsilon,\sigma,\widetilde{T},R}^\alpha$ and the Arzel\`a-Ascoli Theorem, we see by the above bounds that there exists a subsequence of $\set{u_N}_{N=1}^\infty$, which converges in $C([0,\widetilde{T}],B_{r-\epsilon,\sigma,\widetilde{T},R}^\alpha)$ to an element $w\in C([0,\widetilde{T}],B_{r-\epsilon,\sigma,\widetilde{T},R}^\alpha)$.  As shown in Proof 2 of Theorem \ref{thm:regularity}, this sequence also converges in $C([0,\widetilde{T}],V)$ to the unique solution $u$ given in Theorem \ref{thm:regularity}.  By the uniqueness of limits, $u=w\in C([0,\widetilde{T}],B_{r-\epsilon,\sigma,\widetilde{T},R}^\alpha)$, and furthermore, we have the bound
 \begin{align}
&\phantom{{}=}\nonumber
\sup_{t\in[0,\widetilde{T}]}\|u(t)\|_{B_{r-\epsilon,\sigma,\widetilde{T},R}^\alpha}^2
\\&\leq\nonumber
\alpha^2\|A^{r/2}e^{\sigma A^{1/2}}\nabla u^{in}\|^2_{L^2({\nT^3})}
+
\|A^{r/2}e^{\sigma A^{1/2}} u^{in}\|^2_{L^2({\nT^3})}
+
2c_1\int_0^{\widetilde{T}}\kappa^3(\xi)\,d\xi.
\end{align}
Since this bound holds uniformly for all $0<\epsilon\ll 1$, it also holds for $\epsilon=0$, which can be seen by using the Fourier series representation
\begin{align*}
   \|u(t)\|_{B_{r-\epsilon,\sigma,\widetilde{T},R}^\alpha}^2
   =\sum_{j\in\nZ^3}(1+\alpha^2|j|^2)|j|^{2(r-\epsilon)}e^{2\tau_{\text{min}}|j|}|\hat{u}_j(t)|^2,
\end{align*}
taking the limit as $\epsilon\maps0$ and passing the limit inside the infinite sum using, for example,  the Lebesgue Monotone Convergence Theorem for the counting measure.
Here, we have used the notation $\hat{u}_j(t)$ for the Fourier coefficients of $u(t)$.  Thus,
\begin{align*}
&\phantom{{}=}\nonumber
\alpha^2\|A^{r/2}e^{\tau_{\min} A^{1/2}}\nabla u(t)\|^2_{L^2({\nT^3})}
+
\|A^{r/2}e^{\tau_{\min} A^{1/2}} u(t)\|^2_{L^2({\nT^3})}
\\&\leq\nonumber
\alpha^2\|A^{r/2}e^{\sigma A^{1/2}}\nabla u^{in}\|^2_{L^2({\nT^3})}
+
\|A^{r/2}e^{\sigma A^{1/2}} u^{in}\|^2_{L^2({\nT^3})}
+
2c_1\int_0^{\widetilde{T}}\kappa^3(\xi)\,d\xi,
\end{align*}
for all $t\in[0,\widetilde{T}]$.  Finally, by choosing $t=\widetilde{T}$ in the above inequality, and by   recalling that $\tau_{\min}=\tau(\widetilde{T})$ and that $\widetilde{T}$ was chosen arbitrarily in $(0,T]$, we have established \eqref{Gev_Bnd_Thm} for $u$, with $\widetilde{T}$ playing the role of $t$.
\end{proof}

\section{Convergence to the 3D Euler Equations and a Blow-up Criterion}\label{sec:Blowup}

We now consider the three-dimensional Euler equations, namely \eqref{NSE} with $\nu=0$ and periodic boundary conditions, or equivalently \eqref{fun_Euler}.  It is known that if $u^{in}\in H^s(\nT^3)\cap V$ for $s>5/2$, then there exists a unique solution to these equations in $C([0,T_*],H^s(\nT^3)\cap V))\cap C^1([0,T_*],H^{s-1}(\nT^3)\cap V))$ for some $T_*>0$ (see, e.g., \cite{Lichtenstein_1930, Kato1972, Kato1974, Temam1976, Marchioro_Pulvirenti_1994, Majda_Bertozzi_2002}). Observe, however, that the question of existence of weak solutions for the Euler equations, i.e., for $u^{in}\in H^s$, $s<5/2$, or $u^{in}\in C^\delta$, $\delta\in[0,1]$ is still open.  In fact, the only results for short-time existence are those for $u^{in}\in C^{1,\delta}$, proven in \cite{Lichtenstein_1930}, or $u^{in}\in H^s$, $s>5/2$, (see, e.g., \cite{Marchioro_Pulvirenti_1994,  Majda_Bertozzi_2002, Kato1972}), where the solutions are well-posed in the sense of Hadamard.  Recently, it was shown in \cite{Bardos_Titi_2009} that the three-dimensional Euler equations are ill-posed in the space $C^{\delta}$.

For arbitrary $u^{in}\in H^s(\nT^n)\cap V$, with $s\geq 3$, we consider the maximal time interval $[0,T_{\text{max}})$, $T_{\text{max}}\geq T_*$, for which a solution to the Euler equations exists and is unique.  It is a major open problem in mathematics to determine whether $T_{\text{max}}$ is finite; that is, whether solutions exist globally in time or experience blow-up in finite time (see the recent surveys \cite{Bardos_Titi_2007} and \cite{Constantin2007}).  In the present section, we present a criterion for blow-up of the Euler equations using methods similar to those in \cite{Khouider_Titi2008}. There are few other known criteria for blow-up of the Euler equations which are directly checkable for the Euler equations themselves (see, e.g., \cite{Beale_Kato_Majda_1984, Ferrari_1993, Constantin_Fefferman_Majda_1996, Deng_Hou_Yu_2005}).

\begin{theorem}[Convergence]\label{thm:convergence}
    Given initial data $u^{in}, u_\alpha^{in}\in H^s(\nT^3)$ for some $s\geq3$, let $u, u_\alpha \in C([0,T],H^s({\nT^3}))\cap C^1([0,T],H^{s-1}({\nT^3}))$ be the corresponding solutions to \eqref{NSE} and \eqref{EV}, respectively, where $0<T<T_{\text{max}}$, and $T_{\text{max}}$ is the maximal time for which a solution to the Euler equations exists and is unique.  Then:
 \begin{itemize}
    \item[(i)] For all $t\in[0,T]$,
    \begin{align}
\nonumber&
\phantom{{}=}\|u(t)-u_\alpha(t)\|_{L^2({\nT^3})}^2
+\alpha^2\|\nabla(u(t)- u_\alpha(t))\|_{L^2({\nT^3})}^2
\\\label{conv_claim}&\leq
(\|u^{in}-u_\alpha^{in}\|_{L^2({\nT^3})}^2
+\alpha^2\|\nabla(u^{in}- u_\alpha^{in})\|_{L^2({\nT^3})}^2)e^{Ct}
+C\alpha^2(e^{Ct}-1).
\end{align}

    \item[(ii)] Consequently, if
    \begin{equation}\label{alpha_conv}
 \|u^{in}-u_\alpha^{in}\|_{L^2({\nT^3})}^2
+\alpha^2\|\nabla(u^{in}- u_\alpha^{in})\|_{L^2({\nT^3})}^2\maps 0\text{ as } \alpha\maps0
\end{equation}
 (in particular, when $u_\alpha^{in}= u^{in}$ for all $\alpha>0$), then $u_\alpha\maps u$ in  $C([0,T],H)$, as $\alpha\maps0$.
 \end{itemize}
\end{theorem}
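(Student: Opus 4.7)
The plan is to derive an energy-type estimate for $w := u - u_\alpha$ by mimicking the natural Voigt energy structure on the Euler side, then close the estimate with Gr\"onwall.

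First I would rewrite the three-dimensional Euler equation in a form compatible with the Voigt operator. Since $u$ solves \eqref{fun_Euler}, adding and subtracting $\alpha^2 A\partial_t u$ gives
\begin{equation*}
(I+\alpha^2 A)\partial_t u + B(u,u) = \alpha^2 A\partial_t u,
\end{equation*}
while $u_\alpha$ solves $(I+\alpha^2 A)\partial_t u_\alpha + B(u_\alpha,u_\alpha)=0$. Subtracting and using the bilinearity identity $B(u,u)-B(u_\alpha,u_\alpha)=B(w,u)+B(u_\alpha,w)$ yields
\begin{equation*}
(I+\alpha^2 A)\partial_t w + B(w,u) + B(u_\alpha,w) = \alpha^2 A\partial_t u.
\end{equation*}
The right-hand side is the ``consistency error'' of the Voigt regularization, and the fact that it carries the prefactor $\alpha^2$ is what will drive the convergence.

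Next I would take the $L^2$ inner product with $w$. On the left, $\langle(I+\alpha^2 A)\partial_t w,w\rangle = \tfrac{1}{2}\tfrac{d}{dt}(\|w\|_{L^2}^2+\alpha^2\|\nabla w\|_{L^2}^2)$ by self-adjointness of $I+\alpha^2 A$ and integration by parts; the $B(u_\alpha,w)$ term vanishes by \eqref{B:zero}; and for the remaining nonlinear term I would use H\"older with $\nabla u \in L^\infty$ (legitimate since $u\in C([0,T],H^s)$ with $s\geq 3$ so $s-1>3/2$) to obtain
\begin{equation*}
|\langle B(w,u),w\rangle| \leq \|\nabla u\|_{L^\infty} \|w\|_{L^2}^2 \leq C\|w\|_{L^2}^2,
\end{equation*}
where $C$ depends on $\sup_{[0,T]}\|u\|_{H^s}$. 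On the right, I integrate by parts and apply Cauchy–Schwarz followed by Young's inequality:
\begin{equation*}
\alpha^2\langle A\partial_t u,w\rangle = \alpha^2\langle \nabla\partial_t u,\nabla w\rangle \leq \tfrac{1}{2}\alpha^2\|\nabla\partial_t u\|_{L^2}^2 + \tfrac{1}{2}\alpha^2\|\nabla w\|_{L^2}^2.
\end{equation*}
Since $u \in C^1([0,T],H^{s-1})$ with $s-1\geq 2$, the factor $\|\nabla\partial_t u\|_{L^2}^2$ is bounded uniformly on $[0,T]$ by a constant depending only on $\|u^{in}\|_{H^s}$ and $T$.

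Combining these bounds gives
\begin{equation*}
\tfrac{d}{dt}\bigl(\|w\|_{L^2}^2+\alpha^2\|\nabla w\|_{L^2}^2\bigr) \leq C\bigl(\|w\|_{L^2}^2+\alpha^2\|\nabla w\|_{L^2}^2\bigr) + C\alpha^2,
\end{equation*}
and Gr\"onwall's inequality applied on $[0,T]$ yields \eqref{conv_claim} directly, with the two terms on the right corresponding respectively to the propagation of the initial discrepancy and the $\alpha^2$-sized consistency error. For part (ii), the hypothesis \eqref{alpha_conv} makes the first summand in \eqref{conv_claim} vanish as $\alpha\to 0$, while the second is $O(\alpha^2)$ uniformly on $[0,T]$; since the $H$-norm coincides with the $L^2$-norm on divergence-free fields, this gives $u_\alpha\to u$ in $C([0,T],H)$.

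There is no serious obstacle here; the only subtle point is ensuring enough regularity of the Euler solution to absorb both $\|\nabla u\|_{L^\infty}$ (needing $s>5/2$) and $\|\nabla\partial_t u\|_{L^2}$ (needing $s-1\geq 2$), both of which are guaranteed by the hypothesis $s\geq 3$ together with the local well-posedness result cited for the Euler equations. The clean part of the argument is the reformulation of the Euler equation with the Voigt operator on the left, which turns what might look like an inconsistent comparison (first-order versus pseudo-parabolic dynamics) into a standard energy estimate with an $O(\alpha^2)$ forcing.
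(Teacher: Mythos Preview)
Your proof is correct and follows essentially the same route as the paper's: both derive the Voigt-type energy identity for $w=u-u_\alpha$ with the consistency error $\alpha^2 A\partial_t u$ on the right, bound the nonlinear term by $\|\nabla u\|_{L^\infty}\|w\|_{L^2}^2$, and close with Gr\"onwall. The only cosmetic difference is that the paper substitutes the Euler equation for $\partial_t u$ and estimates $\|\nabla((u\cdot\nabla)u)\|_{L^2}$, whereas you bound $\|\nabla\partial_t u\|_{L^2}$ directly from the hypothesis $u\in C^1([0,T],H^{s-1})$; these are equivalent since $\partial_t u = -P_\sigma((u\cdot\nabla)u)$.
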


\begin{remark} Regarding part (\textit{ii}) of Theorem \ref{thm:convergence}, one may find many sufficient conditions for \eqref{alpha_conv} to hold.  For example, if $u^{in}_\alpha\maps u^{in}$ in $L^2(\nT^3)$, as $\alpha\maps0$, and either $\alpha\|u^{in}_\alpha\|_{H^1(\nT^3)}\maps0$, as $\alpha\maps0$, or $\|u^{in}_\alpha\|_{H^\sigma(\nT^3)}\leq C/\alpha^\sigma$ for some $\sigma\in(1,s]$, then \eqref{alpha_conv} holds, due to the interpolation inequality (see, e.g., \cite{Adams2003})
\[
   \|u_\alpha^{in}-u^{in}\|_{H^1(\nT^3)}\leq C\|u_\alpha^{in}-u^{in}\|_{L^2(\nT^3)}^{1-1/\sigma}\|u_\alpha^{in}-u^{in}\|_{H^\sigma(\nT^3)}^{1/\sigma}.
\]
Therefore $u_\alpha\maps u$ in $C([0,T],H)$, as $\alpha\maps0$.
\end{remark}

\begin{remark}  With slightly more effort, one can prove
similar convergence results to Theorem \ref{thm:convergence} for
$u^{in}, u^{in}_\alpha\in H^s(\nT^3)$ with $s>5/2$.
However, for simplicity of presentation, we take
$s\geq3$.  In fact, in lieu of the result in \cite{Pak_Park_2004},
which shows the local well-posedness of the Euler equations in
the Besov space $B_{\infty,1}^1(\nT^n)$, $n\geq 2$, one can
prove a similar result to Theorem \ref{thm:convergence} assuming only that
$u^{in}, u^{in}_\alpha\in B_{\infty,1}^1(\nT^n)$.
\end{remark}

\begin{proof}
Subtracting \eqref{EV} from the Euler equations gives
\begin{align*}
&
\partial_t (u-u_\alpha)
+\alpha^2\partial_t\triangle u_\alpha +\nabla (p-p_\alpha)
\\=&
-(u\cdot\nabla) u+(u_\alpha\cdot\nabla) u_\alpha
\\=&
-((u-u_\alpha)\cdot\nabla)u
+((u-u_\alpha)\cdot\nabla)(u-u_\alpha)
-(u\cdot\nabla)(u-u_\alpha).
\end{align*}
We note that since $u, u_\alpha\in C^1([0,T], H^{s-1})$ and $s\geq3$, we have
$u_t$, $\partial_t u_\alpha$, $\triangle\partial_t u$,
$\triangle\partial_t u_\alpha\in C([0,T],L^2(\nT^n))$, so that
taking inner products with these terms is justified.  This is
because $H^{s-1}(\nT^3)$ is an algebra for $s\geq3$ (see, e.g.,
\cite{Adams2003}), so we also have $u\cdot\nabla u,
(u_\alpha\cdot\nabla) u_\alpha \in C([0,T],H^{s-1}(\nT^n))$.  Using
\eqref{B:zero} and the fact that $\nabla\cdot u=\nabla\cdot
u_\alpha=0$, we integrate against $u-u_\alpha$ to find
\begin{equation*}
\frac{1}{2}\frac{d}{dt}\|u-u_\alpha\|_{L^2({\nT^3})}^2
+\alpha^2\int_{\nT^3}\triangle\partial_t
u_\alpha\cdot(u-u_\alpha)\,dx =-\int_{\nT^3}
((u-u_\alpha)\cdot\nabla)u\cdot(u-u_\alpha)\,dx.
\end{equation*}
The exchange of the order of the integral and the time derivative is justified here and below since $u,u_\alpha$ are $C^1$ in time with values in $H^{s-1}$.  Adding and subtracting the term $\alpha^2\int_{\nT^3}(\triangle u_t)\cdot(u-u_\alpha)\,dx$, we have
\begin{align*}
&\frac{1}{2}\frac{d}{dt}\|u-u_\alpha\|_{L^2({\nT^3})}^2
-\alpha^2\int_{\nT^3}(\triangle u_t-\triangle
\partial_t u_\alpha)\cdot(u-u_\alpha)\,dx
\\&+\alpha^2\int_{\nT^3}(\triangle u_t)\cdot(u-u_\alpha)\,dx
=
-\int_{\nT^3} ((u-u_\alpha)\cdot\nabla)u\cdot(u-u_\alpha)\,dx.
\end{align*}
Integrating by parts and rearranging gives
\begin{align}\label{uAlpha}
\nonumber&\phantom{={}}\frac{1}{2}\frac{d}{dt}\pnt{\|u-u_\alpha\|_{L^2({\nT^3})}^2
+\alpha^2\|\nabla(u- u_\alpha)\|_{L^2({\nT^3})}^2}
\\&=
-\alpha^2\int_{\nT^3}(\triangle u_t)(u-u_\alpha)\,dx
-\int_{\nT^3} ((u-u_\alpha)\cdot\nabla)u\cdot(u-u_\alpha)\,dx.
\end{align}
Using the fact that $u$ satisfies the Euler equations, we have
\begin{align*}
\nonumber-\alpha^2\int_{\nT^3}(\triangle u_t)\cdot(u-u_\alpha)\,dx
&=
\alpha^2\int_{\nT^3}(- u_t)\cdot\triangle(u-u_\alpha)\,dx
\\
\begin{split}
&=\alpha^2\int_{\nT^3}((u\cdot\nabla)u)\cdot\triangle(u-u_\alpha)\,dx
\\&\phantom{={}}
+\alpha^2\int_{\nT^3}\nabla p\cdot\triangle(u-u_\alpha)\,dx
\end{split}
\\\nonumber&=
-\alpha^2\int_{\nT^3}\nabla ((u\cdot\nabla)u)\cdot
\nabla(u-u_\alpha)\,dx.
\end{align*}
The last equality follows by using integration by parts on the first integral and noting that the second integral term is zero because of \eqref{deRham} and the fact that $\text{div}(\triangle(u-u_\alpha))=0$.  Thus the right-hand side of \eqref{uAlpha} becomes
$$-\alpha^2\int_{\nT^3}(\nabla ((u\cdot\nabla)u))\nabla(u-u_\alpha)\,dx
+\int_{\nT^3} ((u-u_\alpha)\cdot\nabla)u\cdot(u-u_\alpha)\,dx.
$$
Since we are assuming $u$ is a regular solution, i.e., $u\in H^s$ for some $s\geq3$, the $H^3$ norm is finite, so the second term is bounded above by
\begin{align*}
\|\nabla u\|_{L^\infty({\nT^3})}
\|u- u_\alpha\|^2_{L^2({\nT^3})}
\leq
C\|u\|_{H^{3}({\nT^3})}
\|u- u_\alpha\|^2_{L^2({\nT^3})}
\leq
\tilde{C}\|u- u_\alpha\|^2_{L^2({\nT^3})}.
\end{align*}
Using the Cauchy-Schwarz inequality on the first term gives
\begin{align*}
&\phantom{={}}
\abs{-\alpha^2\int_{\nT^3}(\nabla ((u\cdot\nabla)u))\cdot\nabla(u-u_\alpha)\,dx}
\\&\leq
\alpha^2\|\nabla ((u\cdot\nabla)u)\|_{L^2({\nT^3})}\|\nabla(u-u_\alpha)\|_{L^2({\nT^3})}
\\&\leq
\alpha^2\pnt{\||\nabla u|^2\|_{L^2({\nT^3})}+\|D^2u\|_{L^2({\nT^3})}}
\|\nabla(u-u_\alpha)\|_{L^2}
\\&\leq
\alpha^2\pnt{\|\nabla u\|_{L^4({\nT^3})}+\|u\|_{H^2({\nT^3})}}
\|\nabla(u-u_\alpha)\|_{L^2}
\\&\leq
\alpha^2\pnt{C\|\nabla u\|_{H^2({\nT^3})}+\|u\|_{H^2({\nT^3})}}
\|\nabla(u-u_\alpha)\|_{L^2}
\\&\leq
2C\alpha^2
\|\nabla(u-u_\alpha)\|_{L^2}
\\&\leq
C^2\alpha^2+\alpha^2
\|\nabla(u-u_\alpha)\|^2_{L^2}.
\end{align*}
  Collecting the above estimates we have from \eqref{uAlpha}
 \begin{align}
\nonumber&
\phantom{{}=}\frac{d}{dt}\pnt{\|u-u_\alpha\|_{L^2({\nT^3})}^2
+\alpha^2\|\nabla(u- u_\alpha)\|_{L^2({\nT^3})}^2}
\\&\leq
C_1\pnt{\|u- u_\alpha\|^2_{L^2({\nT^3})}
+
\alpha^2\|\nabla(u-u_\alpha)\|^2_{L^2({\nT^3})}}
+
C_2\alpha^2.
\end{align}
Gr\"onwall's inequality then gives
 \begin{align}
\nonumber&
\phantom{{}=}\|u(t)-u_\alpha(t)\|_{L^2({\nT^3})}^2
+\alpha^2\|\nabla(u(t)- u_\alpha(t))\|_{L^2({\nT^3})}^2
\\\label{ModEnergyBound}&\leq
(\|u^{in}-u_\alpha^{in}\|_{L^2({\nT^3})}^2
+\alpha^2\|\nabla(u^{in}- u_\alpha^{in})\|_{L^2({\nT^3})}^2)e^{C_1t}
+\alpha^2\frac{C_2}{C_1}(e^{C_1t}-1).
\end{align}
This proves \eqref{conv_claim}.  Next, if  $u_\alpha^{in}\maps u^{in}$ in $L^2(\nT^3)$ and $\alpha^2\|\nabla u_\alpha^{in}\|_{L^2(\nT^3)}^2\leq M<\infty$ is bounded as $\alpha\maps 0$, noting that the left-hand side of \eqref{conv_claim} is bounded below by $\|u-u_\alpha\|_{L^2({\nT^3})}^2$, we take the $\limsup_{\alpha\maps0}$ to find that $u_\alpha\maps u$ in $C([0,T),H)$.
\end{proof}
Due to Theorem \ref{thm:convergence}, we have the following result.
\begin{theorem}[Blow-up Criterion]
Assume $u^{in}\in H^s$, for some $s\geq3$. Suppose there exists a finite time $T^{**} >0$ such that the solutions $u_\alpha$ of \eqref{fun_Euler_Voigt} with $u_\alpha^{in}=u^{in}$ for each $\alpha>0$ satisfy
\[
   \sup_{t\in[0,T^{**})}\limsup_{\alpha\maps0}\alpha^2\|\nabla u_\alpha(t)\|^2_{L^2({\nT^3})}>0.
\]
Then the Euler equations with initial data $u^{in}$ develop a singularity in the interval $[0,T^{**}]$.
\end{theorem}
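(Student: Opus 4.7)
The plan is to prove the contrapositive: assume the Euler equations with initial data $u^{in}$ admit a regular solution throughout $[0,T^{**}]$, i.e., $T_{\max}>T^{**}$, and derive that
\[
\sup_{t\in[0,T^{**})}\limsup_{\alpha\maps0}\alpha^2\|\nabla u_\alpha(t)\|^2_{L^2(\nT^3)}=0,
\]
in direct contradiction with the hypothesis. Fix some $T\in(T^{**},T_{\max})$, so that by the assumed Euler regularity theory quoted just before Theorem \ref{thm:convergence}, the unique Euler solution satisfies $u\in C([0,T],H^s(\nT^3))$ with $s\geq3$; in particular, $M_T:=\sup_{t\in[0,T]}\|\nabla u(t)\|_{L^2(\nT^3)}<\infty$.

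Next, I would invoke Theorem \ref{thm:convergence}(i) with the choice $u_\alpha^{in}=u^{in}$. In that case the first term on the right-hand side of \eqref{conv_claim} vanishes, so for every $t\in[0,T]$,
\[
\|u(t)-u_\alpha(t)\|_{L^2(\nT^3)}^2+\alpha^2\|\nabla(u(t)-u_\alpha(t))\|_{L^2(\nT^3)}^2\leq C\alpha^2(e^{Ct}-1).
\]
From this I extract the single bound I need:
\[
\alpha\|\nabla(u(t)-u_\alpha(t))\|_{L^2(\nT^3)}\leq \alpha\sqrt{C(e^{CT}-1)}\qquad\text{for all }t\in[0,T].
\]

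To finish, I apply the triangle inequality and the uniform bound $M_T$:
\[
\alpha\|\nabla u_\alpha(t)\|_{L^2(\nT^3)}\leq \alpha\|\nabla u(t)\|_{L^2(\nT^3)}+\alpha\|\nabla(u(t)-u_\alpha(t))\|_{L^2(\nT^3)}\leq \alpha M_T+\alpha\sqrt{C(e^{CT}-1)},
\]
for all $t\in[0,T]$. The right-hand side tends to $0$ as $\alpha\maps0$ and is independent of $t$, hence $\alpha^2\|\nabla u_\alpha(t)\|^2_{L^2(\nT^3)}\maps 0$ uniformly on $[0,T]\supset[0,T^{**})$. This yields the desired contradiction, and therefore $T_{\max}\leq T^{**}$, i.e., a singularity must occur in $[0,T^{**}]$.

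The proof is essentially a one-shot application of Theorem \ref{thm:convergence}, so there is no real technical obstacle; the only subtlety to watch is logical — one must take $T$ strictly greater than $T^{**}$ (rather than working on the half-open interval $[0,T^{**})$ directly) so that the convergence result, which requires a closed subinterval of $[0,T_{\max})$, applies; the uniform-in-$t$ decay then passes harmlessly to the supremum in the hypothesis.
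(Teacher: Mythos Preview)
Your argument is correct. Both your proof and the paper's proceed by contradiction and rely on Theorem~\ref{thm:convergence}, but they extract the contradiction differently. The paper uses only the $L^2$ part of the convergence estimate \eqref{ModEnergyBound} to conclude $\|u_\alpha(t)\|_{L^2}\to\|u(t)\|_{L^2}$, then combines the Euler--Voigt modified energy identity \eqref{H1} with energy conservation for regular Euler solutions to force $\limsup_{\alpha\to0}\alpha^2\|\nabla u_\alpha(t)\|_{L^2}^2=0$. You instead exploit the stronger $\alpha^2\|\nabla(u-u_\alpha)\|_{L^2}^2$ term in \eqref{conv_claim} directly, together with the triangle inequality and the uniform-in-time bound on $\|\nabla u\|_{L^2}$; this avoids appealing to either energy identity. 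Your route is slightly more elementary and yields the convergence uniformly in $t$ for free, while the paper's route has the mild virtue of using only the $L^2$ convergence (so it would survive if one only had access to Theorem~\ref{thm:convergence}(ii)). Your remark about choosing $T>T^{**}$ is harmless but unnecessary: once the solution is assumed regular on $[0,T^{**}]$ one has $T^{**}<T_{\max}$, so Theorem~\ref{thm:convergence} already applies with $T=T^{**}$.
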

\begin{proof}
 Suppose to the contrary that the solution $u$ to the Euler equations is regular on $[0,T^{**}]$.  By \eqref{H1} we have the ``modified energy'' equality
\begin{equation}\label{EnEq}
\|u_\alpha(t)\|_{L^2({\nT^3})}^2
+\alpha^2\|\nabla u_\alpha(t)\|_{L^2({\nT^3})}^2
=
\|u^{in}\|_{L^2({\nT^3})}^2
+\alpha^2\|\nabla u^{in}\|_{L^2({\nT^3})}^2,\end{equation}
for all $t\geq0$.  Taking the limsup of \eqref{EnEq} gives
\begin{equation}\label{limsupEq}
\|u(t)\|_{L^2({\nT^3})}^2
+\limsup_{\alpha\maps0}\alpha^2\|\nabla u_\alpha(t)\|_{L^2({\nT^3})}^2
=
\|u^{in}\|_{L^2({\nT^3})}^2,
\end{equation}
for all $t\in[0,T^{**}]$, since $\|u_\alpha(t)\|_{L^2}\maps\|u(t)\|_{L^2}$ for $t\in[0,T^{**}]$ by \eqref{ModEnergyBound}.  Since regular solutions of the Euler equations conserve energy, we have the energy equality $\|u(t)\|_{L^2({\nT^3})}=\|u^{in}\|_{L^2({\nT^3})}$ for $t\in[0,T^{**}]$.  Thus \eqref{limsupEq} contradicts the hypothesis and $u$ must blow-up in $[0,T^{**}]$.
\end{proof}

\section{The MHD-Voigt Case}\label{sec:MHD}
In this section, we consider a Voigt-type regularization of the inviscid, irressitive magneto-hydrodynamic (MHD) equations, given by
\begin{subequations}\label{IMHDV}
\begin{align}
 -\alpha^2\partial_t \triangle u+\partial_tu +(u\cdot\nabla)u
+\nabla p+\frac{1}{2}\nabla|\B|^2&=(\B\cdot\nabla)\B,
\label{IMHDV1}\\
 -\alpha_M^2\partial_t \triangle \B+\partial_t\B +(u\cdot\nabla)\B-(\B\cdot\nabla)u+\nabla q&=0,
\label{IMHDV2}\\
 \nabla\cdot \B = \nabla\cdot u &= 0,\label{IMHDV3}\\
 \B(0)=\B^{in},\; u(0)&=u^{in},\label{IMHDV4}
\end{align}
\end{subequations}
where $\alpha,\alpha_M>0$, and the boundary conditions are taken to be periodic, and we also assume as before that
\[
  \int_{\nT^3} u\,dx =  \int_{\nT^3} \B\,dx =0.
\]
Here, the unknowns are the fluid velocity field $u(x,t)$, the fluid pressure $p(x,t)$, the magnetic pressure $q(x,t)$, and the magnetic field $\B(x,t)$.  (By formally taking the divergence of \eqref{IMHDV2}, we find that $\nabla q\equiv0$, but this is not assumed \textit{a priori}.) Note that when $\alpha=\alpha_M=0$, we formally retrieve the inviscid, irressitive MHD equations, i.e., the case where the fluid viscosity and the magnetic diffusivity are equal to zero.  The viscous Bardina model for the MHD case has been studied in \cite{LabovschiiTrenchea2009}, and another viscous MHD $\alpha$-model has been studied in \cite{Linshiz_Titi_2007}.  Here, we study the inviscid case, and hence prove stronger results than those reported in \cite{LabovschiiTrenchea2009}.

We first prove that this system has a unique local (in time) solution.  Let us show that \eqref{IMHDV} has a unique short-time solution.  To do this, we follow \cite{Cao_Lunasin_Titi2006} and use the Picard-Lindel\"of Theorem.  Applying $P_\sigma$ (see Section \ref{sec:Pre}) to \eqref{IMHDV}, we obtain the equivalent system
\begin{subequations}\label{LHproj}
\begin{align}
 \pd{}{t}\pnt{\alpha^2 A u+u} &=B(\B,\B)-B(u,u),
\label{LHproj1}\\
 \pd{}{t}\pnt{\alpha_M^2 A \B+\B}&=B(\B,u)-B(u,\B),
\label{LHproj2}\\
 \B(0)&=\B^{in},\; u(0)=u^{in}.\label{LHproj3}
\end{align}
\end{subequations}
Note that it is possible to recover $p$ and $q$ by using \eqref{deRham} (see, e.g., \cite{Lions_1959}).
Denote $v=(\alpha^2 A +I)u$, $Z=(\alpha_M^2 A +I)\B$, $N_1(v,Z)=B(\B,\B)-B(u,u)$, and $N_2(v,Z)=B(\B,u)-B(u,\B)$. Then \ref{LHproj} is equivalent to the system
\begin{equation}\label{vectsys}
 \pd{}{t} \binom{v}{Z}
=\binom{N_1(v,Z)}{N_2(v,Z)}
,\:\:
\binom{v(0)}{Z(0)}=\binom{v^{in}}{Z^{in}}
:=\binom{-\alpha^2 \triangle u^{in} +u^{in}}{-\alpha_M^2 \triangle\B^{in} +\B^{in}}.
\end{equation}
Using this form of \eqref{IMHDV}, we will now prove the following theorem.

\begin{theorem}[Short-time existence and uniqueness]\label{MHDshort}
Let $v^{in},Z^{in}\in V'$.\\ Then there exists a time $T
=T(\|v^{in}\|_{V'},\|Z^{in}\|_{V'})>0$ such that \eqref{vectsys} has a unique solution $(v,Z)\in C^1([-T,T],V')$, or equivalently, $(u,\B)\in C^1([-T,T],V)$.
\end{theorem}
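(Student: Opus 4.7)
The plan is to set up \eqref{vectsys} as an abstract ordinary differential equation in the Banach space $X := V'\times V'$ and apply the Picard--Lindel\"of theorem, exactly in the spirit of the argument in \cite{Cao_Lunasin_Titi2006} for the Euler--Voigt case. The key observation is that, because of the periodic boundary conditions and the mean-zero condition, both $(\alpha^2 A+I)$ and $(\alpha_M^2 A+I)$ are topological isomorphisms from $V$ onto $V'$ (and also from $H^{m}\cap V$ onto $H^{m-2}\cap V$ for every $m$), so writing $u=(\alpha^2A+I)^{-1}v$ and $\mathcal{B}=(\alpha_M^2A+I)^{-1}Z$ makes perfect sense whenever $v,Z\in V'$, and produces $u,\mathcal{B}\in V$ with the quantitative estimate
\begin{equation*}
\|u\|_{V}\leq C_\alpha\|v\|_{V'},\qquad \|\mathcal{B}\|_{V}\leq C_{\alpha_M}\|Z\|_{V'}.
\end{equation*}
Hence the map $F(v,Z):=(N_1(v,Z),N_2(v,Z))$ is a well-defined map $X\to X$.

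Next I would verify that $F:X\to X$ is locally Lipschitz. The only ingredient needed beyond the inversion bound above is the standard fact from Lemma \ref{B:prop} that $B:V\times V\to V'$ is a continuous bilinear map, so that $\|B(w_1,w_2)\|_{V'}\leq C\|w_1\|_V\|w_2\|_V$. Writing, for example,
\begin{equation*}
B(\mathcal{B}_1,\mathcal{B}_1)-B(\mathcal{B}_2,\mathcal{B}_2)=B(\mathcal{B}_1-\mathcal{B}_2,\mathcal{B}_1)+B(\mathcal{B}_2,\mathcal{B}_1-\mathcal{B}_2),
\end{equation*}
and similarly for the $B(u,u)$, $B(u,\mathcal{B})$, $B(\mathcal{B},u)$ terms, and using the isomorphism bounds to pass from $V$-norms of $u,\mathcal{B}$ to $V'$-norms of $v,Z$, yields
\begin{equation*}
\|F(v_1,Z_1)-F(v_2,Z_2)\|_{X}\leq C_{\alpha,\alpha_M}\bigl(\|v_1\|_{V'}+\|v_2\|_{V'}+\|Z_1\|_{V'}+\|Z_2\|_{V'}\bigr)\bigl(\|v_1-v_2\|_{V'}+\|Z_1-Z_2\|_{V'}\bigr),
\end{equation*}
which is exactly the local Lipschitz property on any bounded ball of $X$ (with Lipschitz constant depending linearly on the radius).

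With the local Lipschitz property in hand, the Picard--Lindel\"of theorem in the Banach space $X$ produces, for any initial datum $(v^{in},Z^{in})\in X$, a maximal time $T=T(\|v^{in}\|_{V'},\|Z^{in}\|_{V'})>0$ and a unique solution $(v,Z)\in C^1([-T,T],X)$ of \eqref{vectsys}. Since $u=(\alpha^2A+I)^{-1}v$ and $\mathcal{B}=(\alpha_M^2A+I)^{-1}Z$ and these inverses are bounded linear isomorphisms from $V'$ onto $V$, they commute with the time derivative, and hence $(u,\mathcal{B})\in C^1([-T,T],V\times V)$, establishing the equivalent formulation claimed in the theorem.

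There is no real obstacle here beyond bookkeeping: the argument is entirely driven by the smoothing effect of the Voigt regularization, which converts the quadratic nonlinearity $B(\cdot,\cdot)$, that a priori lands only in $V'$, into a locally Lipschitz vector field on $X$ through the two elliptic inverses. The only mild point to be careful about is that one must check that both Picard iterates and the limit remain divergence-free with zero spatial mean; this is automatic because $P_\sigma$, the Leray projector, commutes with $(\alpha^2A+I)^{-1}$ and $(\alpha_M^2A+I)^{-1}$, and the mean-zero condition is preserved by both the bilinear map $B$ and those inverses on the periodic torus.
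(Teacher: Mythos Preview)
Your proposal is correct and follows essentially the same approach as the paper: both show that the vector field $(N_1,N_2)$ is locally Lipschitz on $V'\times V'$ via the continuity of $B:V\times V\to V'$ together with the isomorphism $(\alpha^2A+I)^{-1}:V'\to V$ (and its $\alpha_M$ counterpart), and then invoke Picard--Lindel\"of. The paper simply writes out the Lipschitz estimate more explicitly, using the interpolation form $C|w|^{1/2}\|w\|^{1/2}$ and Poincar\'e's inequality rather than quoting the bilinear bound abstractly, but the underlying argument is the same.
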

\begin{proof}
We follow almost word by word the last section of \cite{Cao_Lunasin_Titi2006}.  To show local existence, it is enough to show that $N_1$ and $N_2$ are locally Lipschitz in the space $V'$.  By the Poincar\'e inequality \eqref{poincare} and \eqref{LHproj1} we have
{\allowdisplaybreaks
\begin{align*}
 &\phantom{={}}\|N_1(v_1,Z_1)-N_1(v_2,Z_2)\|_{V'}
\\ &=
\|B(\B_1,\B_1)-B(u_1,u_1)-B(\B_2,\B_2)+B(u_2,u_2)\|_{V'}
\\ &=
\|B(u_1-u_2,u_2)-B(u_1,u_1-u_2)+B(\B_1-\B_2,\B_2)-B(\B_1,\B_1-\B_2)\|_{V'}
\\ &\leq
\|B(u_1-u_2,u_2)-B(u_1,u_1-u_2)\|_{V'}\! +\!\|B(\B_1-\B_2,\B_2)-B(\B_1,\B_1-\B_2)\|_{V'}
\\
\begin{split}
&=\sup_{\set{w\in V:\|w\|=1}}
|\ip{B(u_1-u_2,u_2)-B(u_1,u_1-u_2)}{w}|
\\&\phantom{=}+{}\sup_{\set{w\in V:\|w\|=1}}
|\ip{B(\B_1-\B_2,\B_2)-B(\B_1,\B_1-\B_2)}{w}|
\end{split}
\\\begin{split}
&\leq C|u_1-u_2|^{1/2}\|u_1-u_2\|^{1/2}\|u_2\|
+C|u_1|^{1/2}\|u_1\|^{1/2}\|u_1-u_2\|
\\&\phantom{=}
+C|\B_1-\B_2|^{1/2}\|\B_1-\B_2\|^{1/2}\|\B_2\|
+C|\B_1|^{1/2}\|\B_1\|^{1/2}\|\B_1-\B_2\|
\end{split}
\\&\leq
C\lambda_1^{-1/4}\|u_1-u_2\|\pnt{\|u_1\|+\|u_2\|}
+C\lambda_1^{-1/4}\|\B_1-\B_2\|\pnt{\|\B_1\|+\|\B_2\|}
\\&\leq
2CR\lambda_1^{-1/4}\pnt{\|u_1-u_2\|+\|\B_1-\B_2\|}
\\&\leq
CR\lambda_1^{-1/4}\pnt{\|v_1-v_2\|_{V'}+\|Z_1-Z_2\|_{V'}},
\end{align*}
}
where $R$ is chosen so that $\|u_1\|,\|u_2\|,\|\B_1\|,\|\B_2\|<R$.
Next, for $N_2$, we have by the Poincar\'e inequality \eqref{poincare} and \eqref{LHproj1},
{\allowdisplaybreaks
\begin{align*}
 &\phantom{={}}\|N_2(v_1,Z_1)-N_2(v_2,Z_2)\|_{V'}
\\ &=
\|B(\B_1,u_1)-B(u_1,\B_1)+B(u_2,\B_2)+B(\B_2,u_2)\|_{V'}
\\ &\leq
\|B(u_1,\B_1-\B_2)-B(u_1-u_2,\B_2)\|_{V'}
\!+\!\|B(\B_1,u_1-u_2)-B(\B_1-\B_2,u_2)\|_{V'}
\\\begin{split}&=
\sup_{\set{w\in V:\|w\|=1}}
|\ip{B(u_1,\B_1-\B_2)-B(u_1-u_2,\B_2)}{w}|
\\&\phantom{={}}+\sup_{\set{w\in V:\|w\|=1}}
|\ip{B(\B_1,u_1-u_2)-B(\B_1-\B_2,u_2)}{w}|
\end{split}
\\\begin{split}&\leq
C|u_1|^{1/2}\|u_1\|^{1/2}\|\B_1-\B_2\|
+C|u_1-u_2|^{1/2}\|u_1-u_2\|^{1/2}\|\B_2\|
\\&\phantom{={}}
+C|\B_1|^{1/2}\|\B_1\|^{1/2}\|u_1-u_2\|
+C|\B_1-\B_2|^{1/2}\|\B_1-\B_2\|^{1/2}\|u_2\|
\end{split}
\\&\leq
C\lambda_1^{-1/4}(\|u_1\|+\|u_2\|)\|\B_1-\B_2\|
+C\lambda_1^{-1/4}(\|\B_1\|+\|\B_2\|)\|u_1-u_2\|
\\&\leq
2CR\lambda_1^{-1/4}\pnt{\|u_1-u_2\|+\|\B_1-\B_2\|}
\\
\begin{split}
&\leq
CR\lambda_1^{-1/4}\pnt{\|v_1-v_2\|_{V'}+\|Z_1-Z_2\|_{V'}},
\end{split}
\end{align*}
}
where again $R$ is chosen so that $\|u_1\|,\|u_2\|,\|\B_1\|,\|\B_2\|<R$.  Thus, $N_1$ and $N_2$ are locally Lipschitz in $V'$, so the right-hand side of \eqref{vectsys} is as well.  Therefore by the Picard-Lindel\"of Theorem, there exists a unique solution $(v,Z)$ to \eqref{vectsys} such that $v,Z\in C^1([-T,T],V)$ for some $T>0$ which may depend upon the initial conditions.
\end{proof}

We next show that in fact, we have global existence.
\begin{theorem}[Global existence and uniqueness]\label{MHDglobal}
Let $v^{in},Z^{in}\in V'$\! (equivalently $u^{in},\B^{in}\in V$). Then \eqref{vectsys} has a unique solution $(v,Z)\in C^1((-\infty,\infty),V')$ (equivalently $(u,\B)\in C^1((-\infty,\infty),V)$).
\end{theorem}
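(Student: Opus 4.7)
The plan is to bootstrap the short-time existence provided by Theorem \ref{MHDshort} to global existence by means of an \emph{a priori} ``modified energy'' identity. Since the time of existence $T$ in Theorem \ref{MHDshort} depends only on $\|u^{in}\|_V$ and $\|\B^{in}\|_V$ (equivalently on $\|v^{in}\|_{V'}$ and $\|Z^{in}\|_{V'}$), it suffices to prove that $\|u(t)\|_V + \|\B(t)\|_V$ remains bounded uniformly on any finite time interval on which the solution exists. A standard continuation argument will then extend the local solution to all of $(-\infty,\infty)$.

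To derive the bound, I would take the $V'$-action of \eqref{LHproj1} against $u$ and of \eqref{LHproj2} against $\B$ (both justified by $u,\B\in C^1([-T,T],V)$, so that $\partial_t(\alpha^2 Au+u),\,\partial_t(\alpha_M^2 A\B+\B)\in C([-T,T],V')$). Using \eqref{B:zero} together with $\nabla\cdot u=\nabla\cdot \B=0$, the self-interaction terms $\ip{B(u,u)}{u}$ and $\ip{B(u,\B)}{\B}$ vanish, yielding
\begin{align*}
\tfrac{1}{2}\tfrac{d}{dt}\pnt{\|u\|_{L^2}^2+\alpha^2\|\nabla u\|_{L^2}^2}
&=\ip{B(\B,\B)}{u},\\
\tfrac{1}{2}\tfrac{d}{dt}\pnt{\|\B\|_{L^2}^2+\alpha_M^2\|\nabla \B\|_{L^2}^2}
&=\ip{B(\B,u)}{\B}.
\end{align*}
The key cancellation is the antisymmetry \eqref{B:Alt}, applied with the divergence-free field $\B$ in the first slot: $\ip{B(\B,\B)}{u}=-\ip{B(\B,u)}{\B}$. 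Adding the two identities therefore gives the conservation law
\begin{equation*}
\|u(t)\|_{L^2}^2+\alpha^2\|\nabla u(t)\|_{L^2}^2+\|\B(t)\|_{L^2}^2+\alpha_M^2\|\nabla \B(t)\|_{L^2}^2
= \|u^{in}\|_{L^2}^2+\alpha^2\|\nabla u^{in}\|_{L^2}^2+\|\B^{in}\|_{L^2}^2+\alpha_M^2\|\nabla \B^{in}\|_{L^2}^2,
\end{equation*}
valid for every $t$ in the interval of existence.

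With this identity in hand, the $V$-norms of $u$ and $\B$ are controlled uniformly in $t$ by a constant depending only on $\alpha$, $\alpha_M$, and the initial data. Since Theorem \ref{MHDshort} guarantees a lower bound on the existence time in terms of $\|u\|_V+\|\B\|_V$, the maximal interval of existence cannot be finite: one could otherwise restart the local problem at any time within that interval and extend the solution beyond it, contradicting maximality. Uniqueness on the global interval follows from the local uniqueness in Theorem \ref{MHDshort} by a standard patching argument. The principal technical point to watch is the rigorous justification of the energy pairings; this is immediate here because $(u,\B)\in C^1([-T,T],V)$ makes each term lie in $V$ or $V'$ with matching regularity, so no Galerkin truncation is needed (although one could equivalently carry out the estimate at the Galerkin level as in Proof 2 of Theorem \ref{thm:regularity} and pass to the limit). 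The only subtlety worth verifying carefully is the cancellation of the Lorentz-force terms via \eqref{B:Alt}; once that is secured, globalization is automatic.
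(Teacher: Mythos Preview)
Your proposal is correct and follows essentially the same route as the paper: pair \eqref{LHproj1} with $u$ and \eqref{LHproj2} with $\B$, use \eqref{B:zero} to kill the self-advection terms and \eqref{B:Alt} to cancel the cross terms, obtain the conserved modified energy, and conclude by a continuation argument. The only cosmetic difference is that the paper phrases the last step as a contradiction with a putative finite maximal time $T_*$, whereas you phrase it as a direct continuation; the content is identical.
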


\begin{proof}
It is sufficient to show that on the maximal time interval of existence $\|v(t)\|_{V'}$ and $\|Z(t)\|_{V'}$ are finite.  Let $[0,T_*)$ be the maximal interval of existence.  If $T_*=\infty$, we are done, so suppose $T_*<\infty$.  Then
we must have
$\limsup_{t\maps T_*}\|v(t)\|_{V'}=\infty$ or $\limsup_{t\maps T_*}\|Z(t)\|_{V'}=\infty$, otherwise we could use Theorem \ref{MHDshort} to extend the solution further in time, contradicting the definition of $T_*$.  Therefore, we must have
\begin{equation}\label{limsups}
\limsup_{t\maps T_*^-}\|u(t)\|=\infty
\tabM\text{or}\tabM
\limsup_{t\maps T_*^-}\|\B(t)\|=\infty.
\end{equation}
Taking the inner product of \eqref{LHproj1} with $u$ and \eqref{LHproj2} with $\mathcal{B}$ and integrating by parts, we have
\begin{subequations}\label{MHDL2ip}
\begin{align}
 \pd{}{t}\pnt{\alpha^2 \|u\|^2+|u|^2} &=(B(\B,\B),u),
\label{MHDL2ip1}\\
 \pd{}{t}\pnt{\alpha_M^2 \|\B\|^2+|\B|^2}&=(B(\B,u),\B)=-(B(\B,\B),u),
\label{MHDL2ip2}
\end{align}
\end{subequations}
where in the last equation, we used \eqref{B:Alt}.  Adding \eqref{MHDL2ip1} and \eqref{MHDL2ip2} and integrating in time, we obtain on $[0,T_*)$
\begin{equation}\label{L2eq}
 \alpha^2 \|u\|^2+\alpha_M^2 \|\B\|^2+|u|^2+|\B|^2
=\alpha^2 \|u^{in}\|^2+\alpha_M^2 \|\B^{in}\|^2+|u^{in}|^2+|\B^{in}|^2.
\end{equation}
Since the right-hand side is finite for all time, this contradicts \eqref{limsups}.  The proof is nearly identical for $(-T_*,0]$.  Thus, $T_*=\infty$.
\end{proof}

The higher-order regularity for system \eqref{IMHDV} holds as well.

\begin{theorem}\label{thm:mag_reg}
   Let $u^{in},\B^{in}\in H^m(\nT^3)\cap V$, for $m\geq 1$.  Then the unique solution $(u,\B)$ of \eqref{IMHDV} given by Theorem \ref{MHDglobal} lies in $C((-\infty,\infty),H^m\cap V)$.
\end{theorem}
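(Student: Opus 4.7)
The plan is to follow the scheme of Theorem~\ref{thm:regularity} (in particular Proof~2), promoted to a coupled system, relying on the fact that the key nonlinear cancellations from the hydrodynamic case survive in the MHD setting thanks to the antisymmetry \eqref{B:Alt} and the divergence-free conditions \eqref{IMHDV3}. I would first record the Galerkin system: let $P_N$ be as in Proof~2 of Theorem~\ref{thm:regularity} and let $(u_N,\mathcal{B}_N)$ solve the finite-dimensional ODE obtained from \eqref{LHproj} by projecting the right-hand sides with $P_N$. Local existence is immediate (quadratic nonlinearity on $H_N\times H_N$), and the energy identity \eqref{L2eq} for the Galerkin system (which holds verbatim, since \eqref{B:Alt}, \eqref{B:zero} pass through $P_N$) gives a uniform $V\times V$ bound, hence $T_N=\infty$ and the $m=1$ conclusion.

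The induction is then on $m\geq 2$. Assume the $H^{m-1}\cap V$ bound is already known (uniformly in $N$ up to each fixed $T>0$). Fix a multi-index $\beta$ with $|\beta|\leq m-1$, apply $\partial^{\beta}$ to both evolution equations in \eqref{IMHDV}, take the $L^2$ inner products with $\partial^{\beta}u_N$ and $\partial^{\beta}\mathcal{B}_N$ respectively, integrate by parts (using \eqref{deRham} to eliminate the pressures $p$ and $q$ as well as the $\tfrac12\nabla|\mathcal{B}|^2$ term), and add. The time-derivative terms assemble into
\[
\tfrac{1}{2}\tfrac{d}{dt}\bigl(\alpha^{2}\|\nabla\partial^{\beta}u_N\|_{L^2}^{2}+\|\partial^{\beta}u_N\|_{L^2}^{2}+\alpha_M^{2}\|\nabla\partial^{\beta}\mathcal{B}_N\|_{L^2}^{2}+\|\partial^{\beta}\mathcal{B}_N\|_{L^2}^{2}\bigr),
\]
and summing over $|\beta|\leq m-1$ produces $\tfrac12\tfrac{d}{dt}E_m(t)$ for a natural energy $E_m\cong\|u_N\|_{H^m}^2+\|\mathcal{B}_N\|_{H^m}^2$ (using \eqref{elliptic_reg}).

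On the right-hand side, the Leibniz expansion produces four families of terms coming from $B(u_N,u_N)$, $B(\mathcal{B}_N,\mathcal{B}_N)$, $B(u_N,\mathcal{B}_N)$, and $B(\mathcal{B}_N,u_N)$. The $\gamma=0$ contributions are the crucial ones: the pure hydrodynamic term $\int[(u_N\cdot\nabla)\partial^{\beta}u_N]\cdot\partial^{\beta}u_N\,dx$ and its magnetic analogue $\int[(u_N\cdot\nabla)\partial^{\beta}\mathcal{B}_N]\cdot\partial^{\beta}\mathcal{B}_N\,dx$ both vanish by \eqref{B:zero}, while the two cross terms
\[
-\!\int\!\!\bigl[(\mathcal{B}_N\!\cdot\!\nabla)\partial^{\beta}\mathcal{B}_N\bigr]\!\cdot\!\partial^{\beta}u_N\,dx\;\;\text{and}\;\;-\!\int\!\!\bigl[(\mathcal{B}_N\!\cdot\!\nabla)\partial^{\beta}u_N\bigr]\!\cdot\!\partial^{\beta}\mathcal{B}_N\,dx
\]
cancel by \eqref{B:Alt} applied with $w_1=\mathcal{B}_N$. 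All remaining terms carry $|\gamma|\geq 1$, so at least one derivative is shifted off the top-order factor, and they can be estimated by the same case analysis ($|\gamma|=1$, $|\gamma|=2$, $|\gamma|\geq 3$) used in Step~4 of Proof~1 of Theorem~\ref{thm:regularity}, via H\"older's inequality, Sobolev embeddings, Agmon's inequality \eqref{Agmon1/2}, and the inductive $H^{m-1}$ bound. This yields
\[
\tfrac{d}{dt}E_m(t)\leq C\bigl(\|u_N\|_{H^3}+\|\mathcal{B}_N\|_{H^3}+\|u_N\|_{H^{m-1}}+\|\mathcal{B}_N\|_{H^{m-1}}\bigr)E_m(t)+\text{lower order},
\]
and Gr\"onwall together with the inductive hypothesis gives a bound on $E_m$ that is uniform in $N$ on each $[0,T]$.

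Finally, passage to the limit proceeds exactly as in Proof~2 of Theorem~\ref{thm:regularity}: the $H^m\cap V$ bound plus the equations themselves give a uniform $C^1([0,T],H^{m-1}\cap V)$ bound on $(u_N,\mathcal{B}_N)$, Banach--Alaoglu plus Aubin--Lions extract a subsequence converging weak-$\ast$ in $L^{\infty}([0,T],H^m\cap V)$ and strongly in $C([0,T],V)$, and uniqueness (Theorem~\ref{MHDglobal}) identifies the limit as $(u,\mathcal{B})$. Hence $(u,\mathcal{B})\in L^{\infty}_{\mathrm{loc}}(\mathbb{R},H^m\cap V)$; using the equation once more promotes this to $C^{1}_{\mathrm{loc}}(\mathbb{R},H^m\cap V)$ via \eqref{LHproj} and the algebra property of $H^{m-1}$ for $m\geq 2$ (the $m=1$ continuity is already part of Theorem~\ref{MHDglobal}). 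The main obstacle is bookkeeping the four families of nonlinear terms while verifying that the top-order cross terms really cancel; once that is in place, the rest is a direct adaptation of the Euler--Voigt argument.
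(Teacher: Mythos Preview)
Your proposal is correct and follows essentially the same approach as the paper: both arguments adapt Theorem~\ref{thm:regularity} to the coupled system by differentiating, pairing with $(\partial^\beta u,\partial^\beta\mathcal{B})$, and exploiting precisely the cancellation $(B(\mathcal{B},\partial^\beta\mathcal{B}),\partial^\beta u)+(B(\mathcal{B},\partial^\beta u),\partial^\beta\mathcal{B})=0$ from \eqref{B:Alt} together with \eqref{B:zero} to eliminate the top-order terms, then close via the same H\"older/Sobolev/Agmon estimates. The only cosmetic difference is that the paper writes the estimates formally (displaying $m=2$ explicitly) and defers the rigorous justification to the methods of Theorem~\ref{thm:regularity}, whereas you carry the Galerkin framework through explicitly.
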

\begin{proof}
   The proof follows nearly the same steps as in Section \ref{sec:HReg}, \textit{mutatis mutandis}, so we only sketch the main ideas.  The primary difference lies in handling the additional variables given by the magnetic terms.  We mention that the work here is done formally, but can be made rigorous by following similar ideas to those in either of the two proofs of Theorem \ref{thm:regularity} given above.  One must work inductively, first estimating the $H^m$ norms for $m=0,1,2,3$ as in steps 1-3 of Proof 1 of Theorem \ref{thm:regularity}, and then obtain the estimates for general $m\geq 4$, as in step 4.  The cases $m=0,1$ are already given by \eqref{L2eq}.  Here, we only show the $m=2$ case, where the central issue is more transparent.  The other cases are more complicated notationally, but not conceptually.  We first apply $\partial^\beta$ to \eqref{IMHDV1} for an arbitrary $\beta$ with $|\beta|=1$, and integrate the result against $\partial^\beta u$ to obtain
\begin{align}
\nonumber
 \frac{1}{2}\frac{d}{dt}\pnt{\alpha^2 \|\partial^\beta u\|^2+|\partial^\beta u|^2}
 &=
 (B(\partial^\beta\B,\B),\partial^\beta u)+(B(\B,\partial^\beta\B),\partial^\beta u)
 \\&\phantom{{}=}\nonumber
 -(B(\partial^\beta u,u),\partial^\beta u)-(B(u,\partial^\beta u),\partial^\beta u)
 \\\label{MHD_reg_u}&=
 (B(\partial^\beta\B,\B),\partial^\beta u)+(B(\B,\partial^\beta\B),\partial^\beta u)
 \\&\phantom{{}=}
 -(B(\partial^\beta u,u),\partial^\beta u)
\nonumber
\end{align}
Where we have used \eqref{B:zero}. Next, we apply $\partial^\beta$ to \eqref{IMHDV2} and integrate the result against $\partial^\beta \B$
\begin{align}
\nonumber
 \frac{1}{2}\frac{d}{dt}\pnt{\alpha_M^2 \|\partial^\beta \B\|^2+|\partial^\beta \B|^2}
 &=
 (B(\partial^\beta\B,u),\partial^\beta \B)+(B(\B,\partial^\beta u),\partial^\beta \B)
 \\&\phantom{{}=}\nonumber
 -(B(\partial^\beta u,\B),\partial^\beta \B)-(B(u,\partial^\beta \B),\partial^\beta \B)
 \\\label{MHD_reg_B}&=
 (B(\partial^\beta\B,u),\partial^\beta \B)+(B(\B,\partial^\beta u),\partial^\beta \B)
 \\&\phantom{{}=}\nonumber
 -(B(\partial^\beta u,\B),\partial^\beta \B)
\nonumber
\end{align}
Since $(B(\B,\partial^\beta u),\partial^\beta \B)=-(B(\B,\partial^\beta \B),\partial^\beta u)$ by \eqref{B:Alt}, we may add \eqref{MHD_reg_u} and \eqref{MHD_reg_B}, to obtain a very important cancellation of the terms that involve the highest order derivatives.  This gives us
\begin{align}
\nonumber
&\phantom{{}=}\frac{1}{2}\frac{d}{dt}\pnt{\alpha_M^2 \|\partial^\beta \B\|^2+|\partial^\beta \B|^2
 + \alpha^2 \|\partial^\beta u\|^2+|\partial^\beta u|^2}
 \\\label{MHD_reg_uB_eq} &=
 (B(\partial^\beta\B,u),\partial^\beta \B) -(B(\partial^\beta u,\B),\partial^\beta \B)
 \\&\phantom{{}=}\nonumber
 +(B(\partial^\beta\B,\B),\partial^\beta u) -(B(\partial^\beta u,u),\partial^\beta u)
 \\\nonumber&\leq
  3\|\nabla \B\|_{L^2(\nT^3)}\|\nabla \B\|_{L^3(\nT^3)}\|\nabla u\|_{L^6(\nT^3)}
 \\&\phantom{{}=}\nonumber
 +\|\nabla u\|_{L^2(\nT^3)}\|\nabla u\|_{L^3(\nT^3)}\|\nabla u\|_{L^6(\nT^3)}
\\\nonumber&\leq
C\|\B\|\|\B\|_{H^{3/2}(\nT^3)}\|u\|_{H^2(\nT^3)}
 +C\|u\|\|u\|_{H^{3/2}(\nT^3)}\|u\|_{H^2(\nT^3)}
\\\nonumber&\leq
C_{\alpha,\alpha_M}\pnt{\|\B\|_{H^{3/2}(\nT^3)}^2 +\|u\|_{H^{3/2(\nT^3)}}^2+\|u\|_{H^2(\nT^3)}^2}
 \end{align}
since $\|\B\|$, $\|u\|$ are uniformly bounded from \eqref{L2eq}.  Summing over all $\beta$, with $|\beta|=1$, and using the norm equivalence \eqref{elliptic_reg} yields
\begin{align}
\nonumber
   &\phantom{{}=}\frac{1}{2}\frac{d}{dt}\pnt{\alpha_M^2 \|\B\|_{H^2(\nT^3)}^2
   +\|\B\|^2
 + \alpha^2 \|u\|_{H^2(\nT^3)}^2+\|u\|^2}
 \\\label{MHD_reg_uB_sum}&\leq
 C_{\alpha,\alpha_M}\pnt{\|\B\|_{H^{2}(\nT^3)}^2+\|u\|_{H^2(\nT^3)}^2}.
\end{align}
Thus, the $H^2(\nT^3)$ norms have at most algebraic growth by Gr\"onwall's inequality.  To complete the proof, the estimates for the higher derivatives are carried out in a similar way, using the methods of steps 3 and 4 of Theorem \ref{thm:regularity}, and making use of the analogue of the important cancellation that was used to obtain \eqref{MHD_reg_uB_eq}.  We emphasize again that the estimates here are derived formally, but can be made rigorous, as described above.
\end{proof}

\noindent
\begin{remark}
  One can also obtain results for equations \eqref{IMHDV} concerning the Gevrey regularity, similar to those obtained in Section \ref{sec:Gev} for equations \eqref{fun_Euler_Voigt}.  However, these will be omitted here because the  concepts are exactly the same, and one only needs to combine the ideas of the proofs of Theorems \ref{thm:gev} and \ref{thm:mag_reg} to recover them.
\end{remark}

\section*{Acknowledgments} We are thankful to the anonymous referees
for their helpful suggestions and insightful comments. This work was
supported in part by the NSF grants no. DMS-0504619 and no.
DMS-0708832, and by the ISF grant no. 120/06.



\end{document}